\newtheorem{theo}{Theorem}[section]
\newtheorem{lemma}[theo]{Lemma}
\newtheorem{propo}[theo]{Proposition}
\newtheorem{assump}[theo]{Assumption}
\newtheorem{defi}[theo]{Definition}
\newtheorem{coro}[theo]{Corollary}
\newtheorem{rem}[theo]{Remark}
\newcommand\Mod{\operatorname{Mod}}
\newcommand\Set{\operatorname{\bf Set}}
\newcommand\Posl{\operatorname{{\bf Pos}_\lambda}}
\newcommand\Satl{\operatorname{{\bf Sat}_\lambda}}
\newcommand\Satx{\operatorname{{\bf Sat}_\chi}}
\newcommand\lsdk{LS^d(\ck)}
\newcommand\lsnk{LS(\ck)}
\newcommand\sgreat{\triangleright}
\newcommand\sgeq{\trianglerighteq}
\newcommand\Lin{\operatorname{\bf Lin}}
\newcommand\colim{\operatorname{colim}}
\newcommand\monst{\mathfrak {C}}
\newcommand\ck{\mathcal {K}}
\date{May 6, 2015}
\begin{document}
\title[Approximations of superstability in concrete accessible categories]
{Approximations of superstability in concrete accessible categories}
\author[M. Lieberman and J. Rosick\'{y}]
{M. Lieberman and J. Rosick\'{y}}
\thanks{Supported by the Grant Agency of the Czech Republic under the grant 
               P201/12/G028.} 
\address{
\newline M. Lieberman\newline
Department of Mathematics and Statistics\newline
Masaryk University, Faculty of Sciences\newline
Kotl\'{a}\v{r}sk\'{a} 2, 611 37 Brno, Czech Republic\newline
lieberman@math.muni.cz\newline
\newline J. Rosick\'{y}\newline
Department of Mathematics and Statistics\newline
Masaryk University, Faculty of Sciences\newline
Kotl\'{a}\v{r}sk\'{a} 2, 611 37 Brno, Czech Republic\newline
rosicky@math.muni.cz
}
 
\begin{abstract}
We generalize the constructions and results of Chapter 10 in \cite{Ba} to coherent accessible categories with concrete directed colimits and concrete monomorphisms.  In particular, we prove that if any category of this form is categorical in a successor, directed colimits of saturated objects are themselves saturated.
\end{abstract} 
\keywords{}
\subjclass{}

\maketitle
 
\section{Introduction}

A longstanding preoccupation in model theory is the problem of determining, given a category of models $\ck$, the structure of the full subcategory $\Satl(\ck)$ consisting of $\lambda$-saturated models, where saturation is characterized variously in terms of the realization of syntactic types or, in more general contexts, of Galois types.  In particular, there are a number of results on the conditions under which the category $\Satl(\ck)$ is closed under unions of increasing chains---which is to say, closed under {\it directed colimits}---a pleasantly surprising property: in general, one could not reasonably hope for the union of a chain of $\lambda$-saturated models of small cofinality to be itself $\lambda$-saturated.  Results of \cite{S2} and \cite{H} guarantee that if $\ck=\Mod(T)$ with $T$ a superstable first-order theory, $\Satl(\ck)$ has precisely this property.  In the literature on abstract elementary classes, versions of this property---\textit{admitting $\lambda$-saturated unions}, in the language of \cite{Ba}---have typically been used as an analogue of superstability.  Theorem 15.8 in \cite{Ba} shows that for any AEC $\ck$ satisfying the amalgamation and joint embedding properties and containing arbitrarily large models, if $\ck$ is $\kappa$-categorical for $\kappa$ a regular cardinal then it admits $\lambda$-saturated unions for any $\lsnk<\lambda<\kappa$.  That is to say, in such an AEC, $\Satl(\ck)$ is closed under directed colimits for any $\lsnk<\lambda<\kappa$.  A recent result of \cite{BV} extends this to superstable tame abstract elementary classes with amalgamation, and shows, moreover, that if $\ck$ is $\kappa$-categorical for $\kappa$ sufficiently large, then in fact $\ck$ admits $\lambda$-saturated unions for all sufficiently large $\lambda$.  Indeed, they show that $\Satl(\ck)$ (which they denote by $\ck^{\lambda-{sat}}$) is itself an AEC.

In the context of accessible categories, $\lambda$-saturation is more usually defined as an injectivity condition: $M\in\ck$ is $\lambda$-saturated if for any morphism $f:N\to N'$ with $N$ and $N'$ $\lambda$-presentable (roughly, of size less than $\lambda$) and for any $g:N\to M$, there is a morphism $h:N'\to M$ such that $hf=g$.  The structure of $\Satl(\ck)$, here interpreted as the full subcategory of $\lambda$-saturated objects in an accessible category $\ck$, has already been studied to some degree in \cite{R}, which gives conditions under which $\Satl(\ck)$ is itself accessible.  Purely in terms of closure, it is clear that $\Satl(\ck)$ should be closed under $\lambda$-directed colimits, but there are not presently any results giving sufficient conditions for closure under $\mu$-directed colimits for $\mu<\lambda$, let alone under arbitrary directed colimits.  This paper represents a first attempt at bridging this gap.  We will not work with general accessible categories, of course: our focus will be on a slight generalization of the $\kappa$-CAECs of \cite{LR1}, which we call \textit{weak $\kappa$-CAECs}, in which we drop the assumptions of repleteness and iso-fullness included in the definition of $\kappa$-CAECs.  Such categories fall at the extreme right of the following schematic diagram of generalizations:

$$\xymatrix{ & {\rm AECs=\aleph_0\!-\!CAECs}\ar@{-}[ddl]\ar@{-}[dr]^{\rm [8]}\ar@{-}[ddd] & \\
 & & {\rm Weak\,\aleph_0\!-\!CAECs}\ar@{-}[ddd]^{\rm [LR]}\\
{\rm mAECs}\ar@{-}[ddd]\ar@{-}[dr] & & \\
 & {\rm \kappa\!-\!CAECs}\ar@{-}[dr]^{\rm [9]} & \\
 & & {\rm Weak\,\kappa\!-\!CAECs}\ar@{-}[d]\\
\txt{$\kappa\!-\!$AECs\\[5],[19]}\ar@{=}[rr]_{\rm [10]}\ar@{-}[uur] & & \txt{$\kappa\!-\!$accessible\\categories}}$$

We note that weak $\aleph_0$-CAECs are precisely the coherent accessible categories with concrete directed colimits and concrete monomorphisms considered in \cite{LR}.  Results of \cite{LR1} are stated for $\kappa$-CAECs, but are in fact valid for weak $\kappa$-CAECs.  Here we deduce the existence of universal extensions from stability in a general weak $\kappa$-CAEC, before specializing again to the case $\kappa=\aleph_0$.  Ultimately, in Theorem~\ref{bigthm}, we will prove an analogue of Theorem 10.22 in \cite{Ba} in the latter context, guaranteeing the saturation of short chains (and, by extension, arbitrary directed colimits) of saturated objects under the assumption of categoricity.  This illustrates that the property of isomorphism fullness which distinguishes AECs and weak $\aleph_0$-CAECs is not essential for this result.  Moreover, we obtain our result by a more abstract toolkit that should enable further generalizations in the future.

\section{Weak $\kappa$-CAECs}

The broad framework in which we work is a twofold generalization of the category-theoretic characterization of AECs given in \cite{LR}, namely as pairs $(\ck,U)$ where $\ck$ is a category and $U:\ck\to\Set$ is a faithful functor such that
\begin{itemize}\item $\ck$ is an accessible category with directed colimits all of whose morphisms are monomorphisms.\\
\item $(\ck,U)$ is coherent, and has concrete monomorphisms.\\
\item $U$ preserves directed colimits.\\
\item $\ck$ admits a replete, iso-full embedding into the category of structures in a canonical finitary signature $\Sigma_U$ derived from $U$.\end{itemize}

In \cite{LR1}, we note that the assumption that $U$ preserves directed colimits, i.e.~that directed colimits are concrete, may be too limiting.  Crucially, metric AECs, a generalization of AECs in which the underlying objects of the models are not sets but complete metric spaces, do not fall within this scheme: although any mAEC $\ck$ is closed under directed colimits, the forgetful functor $U:\ck\to\Set$ does not preserve directed colimits.  It does, however, preserve $\aleph_1$-directed colimits; that is, $\aleph_1$-directed colimits are concrete.  This leads naturally to the introduction of $\kappa$-concrete AECs (or $\kappa$-CAECs), which are pairs $(\ck,U)$ satisfying the axioms above with the exception that $U$ is only required to preserve $\kappa$-directed colimits.  

In this account, we also drop the assumptions of repleteness and iso-fullness.  This has already been done in some measure in \cite{LR}, where they are explicitly dispensed with, and in \cite{LR1}, where they are included in the definition of a $\kappa$-CAEC but are never used.  Having dropped this assumption, we pass from $\kappa$-CAECs to \textit{weak $\kappa$-CAECs}: 

\begin{defi}{\em We say that a pair $(\ck,U)$ consisting of a category $\ck$ and faithful functor $U:\ck\to\Set$ is a \textit{weak $\kappa$-concrete AEC}, or \textit{weak $\kappa$-CAEC}, if
\begin{enumerate}\item $\ck$ is accessible with directed colimits, and all of its morphisms are monomorphisms.\\
\item $(\ck,U)$ is coherent, and has concrete monomorphims.\\
\item $U$ preserves $\kappa$-directed colimits.\end{enumerate}}\end{defi}
When necessary, we incorporate the index of accessibility of $\ck$ into our notation: if $(\ck,U)$ is a weak $\kappa$-concrete category with $\ck$ $\lambda$-accessible, we say that $(\ck,U)$ is a \textit{weak $(\kappa,\lambda)$-concrete AEC}, or \textit{weak $(\kappa,\lambda)$-CAEC}.  We may occasionally abuse this notation by referring to $\ck$ itself as a weak $\kappa$-CAEC, or weak $(\kappa,\lambda)$-CAEC.

We collect a few basic facts about weak $\kappa$-CAECs:

\begin{rem}\label{wkcaecfacts}{\em Let $(\ck,U)$ be a large weak $(\kappa,\lambda)$-CAEC.\\
\begin{enumerate}\item Because $\ck$ is $\lambda$-accessible and has all directed colimits, it is \textit{well accessible}; that is, $\ck$ is $\mu$-accessible for all regular $\mu\geq\lambda$ (\cite{BR}~4.1).  Note that this is not true for a general $\lambda$-accessible category, e.g. $\Posl$, the category of $\lambda$-directed posets and substructure embeddings, which is $\mu$-accessible only in regular $\mu$ satisfying the sharp inequality $\mu\sgeq\lambda$.  (For more on the relation $\sgeq$, see \cite{AR} 2.11, 2.12, and 2.13.)\\
\item The presentability rank of any object $M$ in $\ck$ is a successor cardinal, say $|M|^+$.  We call $|M|$ the \textit{size of $M$}.  We note that this is a notion of size internal to the category $\ck$---the size of an object $M$ need not, in general, correspond to $|U(M)|$.  That is, $U$ need not preserve sizes. (See \cite{BR}~4.2.)\\
\item There exists a minimal cardinal $\lambda_U\geq\lambda$ such that $U$ preserves $\lambda_U$-presentable objects.  Indeed, one can show that $U$ will preserve all sizes $\mu$ such that $\mu^+\sgreat\kappa$ and $\mu^+\geq\lambda_U$. (See \cite{LR1} 4.11(1) and 4.12)\\
\item $\ck$ admits an EM-functor: there is a faithful functor $E:\Lin\to\ck$ that is faithful and preserves directed colimits.  Moreover, there is a cardinal $\lambda_E$ such that $E$ preserves sizes $\mu$ for which $\mu^+\geq\lambda_E$. (See \cite{LR1}~5.6.)
\end{enumerate}}\end{rem}

We refer readers to \cite{LR1} for further background, including a description of Galois types, saturation, and stability in this context (\cite{LR1} \S 6).  To ensure that these notions are well-behaved, we make the standard assumptions:

\begin{assump}{\em All weak $\kappa$-AECs are assumed to be large (roughly, to have arbitrarily large objects), and to satisfy the amalgamation and joint embedding properties.  Moreover, we assume that there exists a proper class of cardinals $\lambda$ with $\lambda^{<\lambda}=\lambda$, to ensure the existence of arbitrarily large monster objects, $\monst$.}\end{assump}

Note that weak $\aleph_0$-CAECs are precisely the coherent accessible categories with concrete directed colimits and concrete monomorphisms that are the principal focus of \cite{LR}.  Because $\mu^+\sgreat\aleph_0$ for any infinite cardinal $\mu$, the statement on preservations of sizes in Remark~\ref{wkcaecfacts}(2) simplifies considerably: $U$ preserves all sizes starting with $\lambda_U$.  We require one additional result for weak $\aleph_0$-CAECs, which appears as Corollary~7.9 in \cite{LR}.

\begin{theo}Let $\ck$ be a large weak $\aleph_0$-CAEC.  If $\ck$ is $\lambda$-categorical for a regular cardinal $\lambda\geq\lambda_U+\lambda_E$, then the unique object of size $\nu^+$ is saturated.\end{theo}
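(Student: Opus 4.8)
The plan is to read the categoricity cardinal $\lambda$ as a successor $\nu^+$ (this is forced: uniqueness in size $\nu^+$ presupposes categoricity in $\nu^+$, so $\lambda=\nu^+$), and then to build, by a transfinite construction, a genuinely saturated object of size $\nu^+$; categoricity will then force the unique object of that size to be saturated. Recall that for an object of size $\nu^+$ (hence of presentability rank $\nu^{++}$) saturation means $\nu^+$-saturation: every Galois type over a subobject of size at most $\nu$ is realized. Throughout I would work inside a fixed monster $\monst$, whose existence is guaranteed by the standing assumptions (amalgamation, joint embedding, and the cardinal-arithmetic hypothesis), so that Galois types and saturation are well behaved, and I would use the hypothesis $\nu^+\geq\lambda_U+\lambda_E$ to ensure that sizes at and above $\nu^+$ are faithfully recorded by $U$ and realizable by the EM-functor.

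The first and, I expect, the hardest step is to show that categoricity in $\nu^+$ forces \emph{stability in $\nu$}. The natural route is via the Ehrenfeucht--Mostowski functor $E\colon\Lin\to\ck$ of Remark~\ref{wkcaecfacts}(4): since $\nu^+\geq\lambda_E$, the functor $E$ preserves the relevant sizes, so linear orders of size $\nu^+$ yield genuine EM-objects of size $\nu^+$. Were $\ck$ unstable in $\nu$, one would have the order property at $\nu$, and feeding suitably chosen orders into $E$ would produce a family of pairwise non-isomorphic objects of size $\nu^+$, distinguished by the Galois types they realize, contradicting $\nu^+$-categoricity. Equivalently and more concretely, instability in $\nu$ yields more than $\nu$ Galois types over some object of size $\nu$, from which one can assemble two non-isomorphic objects of size $\nu^+$---one realizing all such types, one omitting a chosen one---again contradicting categoricity. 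The difficulty here is the careful bound on the number of Galois types and the transfer of the order property through $E$ while tracking the size conditions imposed by $\lambda_U$ and $\lambda_E$.

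With stability in $\nu$ secured, the existence of universal extensions of size $\nu$ follows from the results deduced earlier in the paper: each object $M$ of size $\nu$ admits an extension $M\to M'$ of size $\nu$ through which every size-$\nu$ extension of $M$ factors, so that $M'$ realizes every Galois type over $M$. I would then fix an object $M_0$ of size $\nu$ and build a continuous chain $(M_i)_{i<\nu^+}$ of objects of size $\nu$ with $M_{i+1}$ a universal extension of $M_i$ and $M_i=\colim_{j<i}M_j$ at limits. Setting $M^\ast=\colim_{i<\nu^+}M_i$, the regularity of $\nu^+$ makes this a $\nu^+$-directed colimit; since the extensions are proper (by largeness) the identity on $M^\ast$ cannot factor through any single $M_i$, so $M^\ast$ has size exactly $\nu^+$.

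Finally I would verify that $M^\ast$ is $\nu^+$-saturated. Given a subobject $N\hookrightarrow M^\ast$ of size at most $\nu$, the object $N$ is $\nu^+$-presentable, and since the chain has regular length $\nu^+$ it is $\nu^+$-directed; hence the inclusion factors through some $M_i$. Any Galois type over $N$ is realized by an element of some extension of $N$ of size $\nu$, which amalgamates with $M_i$ over $N$ to give a type over $M_i$ restricting to it; by universality this type is realized in $M_{i+1}\subseteq M^\ast$, so the original type over $N$ is realized in $M^\ast$. Thus $M^\ast$ is a saturated object of size $\nu^+$, and by $\nu^+$-categoricity the unique object of size $\nu^+$ is isomorphic to $M^\ast$, hence saturated, as claimed.
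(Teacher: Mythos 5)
You should first be aware that the paper contains no proof of this statement to compare against: it is imported from Corollary~7.9 of \cite{LR} (and the ``$\nu^+$'' in the statement is evidently a typo for ``$\lambda$'' --- see how the result is applied in the proof of Proposition~\ref{emsat}, where the object of size $\lambda$ is the one concluded to be saturated). That said, your architecture is the standard one and is essentially what \cite{LR} does: derive Galois-stability below $\lambda$ from categoricity, build a continuous chain of length $\lambda$ in which each successor realizes all types over its predecessor, use regularity of $\lambda$ to factor any small subobject of the colimit through a link of the chain, and invoke categoricity. The back end of your argument --- the chain, the size computation for $M^\ast$, and the restriction-of-types step (which is exactly Lemma~\ref{sattrans}) --- is fine, though universal extensions are more than you need: it suffices that $M_{i+1}$ realize all types over $M_i$, which is the first step of Lemma~\ref{exonespec}. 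One caveat: since the result is later applied to arbitrary regular $\lambda$, you should not silently reduce to $\lambda=\nu^+$; for regular limit $\lambda$ the same chain argument works with links of increasing sizes below $\lambda$.

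The genuine gap is the step you yourself flag as hardest: stability from categoricity. Neither mechanism you propose is carried out, and the first is unsound at this level of generality --- ``instability implies the order property'' is a substantial theorem even for AECs, typically requiring extra hypotheses such as tameness, and nothing in the weak $\aleph_0$-CAEC axioms provides it. The second route is closer but fails as stated: an object realizing more than $\nu$ types over some subobject $M$ of size $\nu$ and one omitting a chosen type may well be abstractly isomorphic, since the isomorphism need not fix $M$. The correct argument is an EM-model type-counting one: by categoricity the unique object of size $\lambda$ is $E(I)$ for a well-ordered $I$, and automorphisms of $I$ over small suborders, pushed through $E$, bound the number of Galois types realized in $E(I)$ over any subobject of size $\mu<\lambda$ by roughly $\mu+\lambda_E$; instability in $\mu$ would yield an object of size $\lambda$ realizing $\mu^+$ types over a subobject of size $\mu$ --- an isomorphism-invariant property --- contradicting categoricity. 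Until that computation is done (with the bookkeeping for $\lambda_U$ and $\lambda_E$ that you correctly anticipate), the proof does not close.
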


\section{Universal Extensions}

In our progress toward Theorem~\ref{bigthm}, we will essentially follow the argument of Chapter 10 in \cite{Ba}.  We begin by connecting stability first to the existence of 1-special extensions, defined below, then to the existence of universal extensions, which is of independent interest.

The following is Definition~10.1 in \cite{Ba}, transferred to the current context.  We work exclusively with Galois $1$-types---in the future, we will simply refer to them as ``types.''
\begin{defi}{\rm Given $M, N\in\ck$ both of size $\mu$, we say that $N$ is a \textit{$1$-special extension} of $\mu$ if there is a continuous chain of morphisms
$$\xymatrix{M_0\to M_1\to\dots\to M_i\to\dots}$$
where $M=M_0$, $M_{i+1}$ realizes all types over $M_i$, and $N=\colim_{i<\mu}M_i$.}\end{defi}

\begin{lemma}\label{exonespec}Let $(\ck,U)$ be a weak $(\kappa,\lambda)$-CAEC.  If $\ck$ is $\mu$-stable for $\mu^+\sgreat\kappa$ and $\mu\geq\lambda$, then any object $M_0$ of size $\mu$ has a $1$-special extension.\end{lemma}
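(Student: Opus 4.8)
The plan is to construct the chain $M_0\to M_1\to\cdots\to M_i\to\cdots$ ($i<\mu$) by transfinite recursion and then put $N=\colim_{i<\mu}M_i$; taking colimits at limit stages makes the chain continuous automatically. All the content sits in the successor step, which I isolate as a claim: \emph{every object $P$ of size $\mu$ has an extension $P^\ast$, again of size $\mu$, that realizes all types over $P$}. Granting this, I keep $M_0$ as given, set $M_{i+1}=M_i^\ast$, put $M_j=\colim_{i<j}M_i$ at limit stages $j<\mu$, and let $N=\colim_{i<\mu}M_i$. Every $M_i$ then has size $\mu$, and $N$ is a colimit of a $\mu$-indexed (hence $\mu^+$-small) diagram of objects of size $\mu$; since $\mu^+$ is regular, such a colimit is $\mu^+$-presentable, so of size at most $\mu$, and it contains $M_0$, so its size is exactly $\mu$. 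Thus $N$ is a $1$-special extension of $M_0$.

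To prove the claim I invoke $\mu$-stability, which bounds the number of (Galois $1$-)types over a size-$\mu$ object by $\mu$, and enumerate the types over $P$ as $\langle p_\alpha:\alpha<\mu\rangle$. Each single type $p_\alpha$ is witnessed by an extension $P\to Q_\alpha$ with $Q_\alpha$ of size $\mu$ carrying a realization of $p_\alpha$. I then weave the witnesses into a continuous chain $\langle P_\alpha:\alpha<\mu\rangle$ with $P_0=P$: at stage $\alpha+1$ I amalgamate $P_\alpha$ with $Q_\alpha$ over their common subobject $P$ and, by downward L\"owenheim--Skolem at $\mu$, pass to a subobject $P_{\alpha+1}$ of size $\mu$ containing the images of both; at limits I take colimits. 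Because every transition map fixes $P$, the realization of $p_\alpha$ in $Q_\alpha$ is carried forward, so $P^\ast=\colim_{\alpha<\mu}P_\alpha$ realizes every $p_\alpha$, i.e.\ all types over $P$; its size is $\mu$ by the same $\mu^+$-smallness argument as above.

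The hypotheses $\mu^+\sgreat\kappa$ and $\mu\geq\lambda$ are exactly what validate this bookkeeping at level $\mu$. By Remark~\ref{wkcaecfacts}(1), $\ck$ is $\mu^+$-accessible (as $\mu^+$ is regular and $\mu^+\geq\lambda$), so ``of size $\mu$'' coincides with $\mu^+$-presentability and the amalgamation and L\"owenheim--Skolem steps genuinely stay among $\mu$-sized objects; the sharp inequality $\mu^+\sgreat\kappa$ is the condition under which size, Galois type, and stability behave as in the classical development of \cite{Ba} (cf.\ Remark~\ref{wkcaecfacts}(3)), ensuring in particular that $U$ computes realizations correctly and that $\kappa$-directedness poses no obstruction to the colimits involved. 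I expect the successor step to be the main obstacle: realizing all $\mu$ types naively could push the size past $\mu^+$, so the real work is to fold the $\leq\mu$ one-type witnesses --- via iterated amalgamation and L\"owenheim--Skolem --- into a single extension that simultaneously realizes everything and remains of size $\mu$, the size control resting on $\mu^+$ being regular and on the arithmetic $\mu\cdot\mu=\mu$. Everything else is routine recursion.
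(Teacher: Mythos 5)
Your proof is correct, and the outer skeleton (a continuous $\mu$-chain with colimits at limit stages, with size controlled by the regularity of $\mu^+$ as in \cite{AR}~1.16) coincides with the paper's. Where you genuinely diverge is in the successor step, i.e.\ in producing a single size-$\mu$ extension of $P$ realizing all types over $P$. You do this the classical AEC way: enumerate the $\leq\mu$ types, realize them one at a time in extensions $Q_\alpha$, and fold the witnesses in by a $\mu$-length iteration of amalgamation over $P$ followed by a downward L\"owenheim--Skolem cut. This works, but it quietly relies on two facts you should make explicit in this categorical setting: (i) monotonicity of Galois types --- that a realization of $p_\alpha$ in $Q_\alpha$ is still a realization after passing along any further embedding --- which here must be checked against the monster-based definition of type equivalence (it follows from the homogeneity of $\monst$, Remark~4.3 of \cite{LR}); and (ii) that the ``cut down to size $\mu$'' step can be performed compatibly with both legs of the amalgam, which follows from $\mu^+$-accessibility and the essential uniqueness of factorizations through $\mu^+$-directed colimits, together with coherence. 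The paper avoids this entire iteration: since every type over $M_0$ is of the form $(f,a_i)$ for a \emph{single} $f:M_0\to\monst$, and $\monst$ is a $\mu^+$-directed colimit of size-$\mu$ objects preserved by $U$, one factors $f$ through one size-$\mu$ subobject of $\monst$ containing all $\mu$ of the $a_i$ simultaneously, realizing every type over $M_0$ in one stroke. The paper's route is shorter and needs no type-monotonicity lemma; yours is more portable to settings without a convenient monster but costs an extra transfinite construction inside each successor step.
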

\begin{proof}By $\mu$-stability, there are at most $\mu$ types over $M_0$, say $\{(f,a_i)\,|\,i<\mu\}$ with $f:M_0\to\monst$.  As $\monst$ is a $\mu^+$-directed colimit of objects of size $\mu$, and such colimits are preserved by $U$, $f$ factors through another object of size $\mu$, say as $M_0\to M_1\to\monst$, such that $a_i\in M_1$ for all $i<\mu$.  Then $M_1$ realizes all types over $M_0$.
We continue in this way, and take colimits at limit stages: note that, by 1.16 in \cite{AR}, the colimit of a chain of fewer than $\mu$ objects of size $\mu$, i.e. $\mu^+$-presentable objects, must be $\mu^+$-presentable, hence itself of size $\mu$.  Thus we build a continuous chain $\langle M_i\to M_j\,|\,i<j<\mu\rangle$, where $M_{i+1}$ realizes all $1$-types over $M_i$ for $i<\mu$.  The colimit of this chain---of size $\mu$, by the reasoning immediately above---is the desired $1$-special extension.\end{proof}

\begin{lemma}\label{sattrans}Let $(\ck,U)$ be a weak $(\kappa,\lambda)$-CAEC.  Given $g:M\to M'$, if $M'$ realizes all types over $M$, then $M'$ realizes all types over any $N$ with $f:N\to M$.\end{lemma}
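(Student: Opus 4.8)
The plan is to show that realization of types is monotone along morphisms: since $N$ maps into $M$ via $f$, every type over $N$ extends to a type over $M$, and a realization of the larger type is automatically a realization of the smaller one. Concretely, I would work inside a monster $\monst$, fixing compatible embeddings $N\xrightarrow{f}M\xrightarrow{g}M'\to\monst$, so that $N$, $M$, and $M'$ are identified with subobjects of $\monst$ with $U(N)\subseteq U(M)\subseteq U(M')$.

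First I would fix an arbitrary type $p$ over $N$ and choose a representative, i.e. an element $a\in U(\monst)$ with $p=\operatorname{tp}(a/N)$ (in the pair notation of the excerpt, $p$ is represented by $(\,\mathrm{incl},a\,)$); in the present framework every type over $N$ arises this way, as types over $N$ are exactly the orbits of $\operatorname{Aut}(\monst/N)$ acting on $U(\monst)$. Reading off the type of this same element over the larger object $M$ yields a type $q=\operatorname{tp}(a/M)$ over $M$ which, by construction, restricts to $p$ along $f$.

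Next I would invoke the hypothesis that $M'$ realizes all types over $M$ (via $g$): applied to $q$, this produces an element $b\in U(M')$ with $\operatorname{tp}(b/M)=q=\operatorname{tp}(a/M)$. The final step is to transfer this realization down to $N$: because the embedding of $N$ factors through $M$, any automorphism of $\monst$ fixing $M$ pointwise also fixes $N$ pointwise, so $\operatorname{Aut}(\monst/M)\subseteq\operatorname{Aut}(\monst/N)$ and equality of types over $M$ forces equality of types over $N$. Hence $\operatorname{tp}(b/N)=\operatorname{tp}(a/N)=p$, and since $b\in U(M')$ this element realizes $p$ over $N$ (via $gf$), as required.

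The step requiring the most care is the last one---the restriction of types along $f$---since it is the only place the hypothesis $f:N\to M$ is actually used. The point is simply the naturality of Galois types under restriction: a witness that $a$ and $b$ have the same type over $M$ (an automorphism fixing $M$, or, working instead with the equivalence-class definition of types, an amalgam of the two realizations over $M$) is a fortiori a witness over $N$. I would make sure the chosen embeddings into $\monst$ are genuinely compatible with $f$ and $g$, so that ``$q$ restricts to $p$'' and ``$b$ realizes $q$ over $M$'' are expressed with respect to the same identifications; once this bookkeeping is fixed, no amalgamation beyond what is already built into the monster is needed, and the lemma is essentially this monotonicity property.
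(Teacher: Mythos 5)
Your argument is correct and takes essentially the same route as the paper's: extend the given type from $N$ to a type over $M$ along $f$, realize that extension in $M'$ via $g$, and observe that the automorphism witnessing realization over $M$ also witnesses realization over $N$ (monotonicity of Galois types). The only difference is bookkeeping: you fix compatible embeddings into the monster at the outset, whereas the paper allows independently chosen embeddings $u:N\to\monst$, $v:M\to\monst$ and corrects the mismatch with an automorphism $\bar{f}$ satisfying $\bar{f}u=vf$, then uses the composite $s\bar{f}$ as the witness---the two schemes are interchangeable.
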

\begin{proof}By joint embedding and saturation of $\monst$, we have embeddings $u:N\to\monst$, $v:M\to\monst$, and $w:M'\to\monst$.  By Remark~4.3 in \cite{LR}---which is stated there for accessible categories with concrete directed colimits, but whose proof does not in fact require any concreteness of colimits whatsoever---there is an automorphism $\bar{f}$ of $\monst$ extending $f$, in the sense that the following diagram commutes:
$$\xymatrix@=3pc{\monst\ar[r]^{\bar{f}} & \monst\\
N\ar[r]_{f}\ar[u]^u & M\ar[u]_{v}}$$
Without loss of generality, any type over $N$ is of the form $(u,a)$, with $a\in\monst$.  We use $\bar{f}$ to transform this into a type over $M$: consider $(v,U\bar{f}(a))$.  This type is realized in $M'$, meaning that there is $b\in U(M')$ and an automorphism $s$ of $\monst$ with $Us(U\bar{f}(a))=Uw(b)$, i.e. $U(s\bar{f})(a)=Uw(b)$, and such that the following diagram commutes:
$$\xymatrix@=3pc{\monst\ar[r]^s & \monst\\
M\ar[u]^v\ar[r]_g & M'\ar[u]_w}$$
Gluing the two commutative squares along $v$, we have 
$$s\bar{f}u=wgf$$
meaning that the automorphism $s\bar{f}$ witnesses the equivalence of $(u,a)$ and $(gf,b)$, meaning that $(u,a)$ is realized in $M'$.\end{proof}

We wish to show that any $1$-special extension $m:M\to\bar{M}$ (subject to certain size conditions) is universal in the following sense, again adapted only superficially from Definition 10.4 in \cite{Ba}.

\begin{defi} {\em Given a subobject $m:M\to\bar{M}$, we say that $\bar{M}$ is \textit{$\mu$-universal over $M$} if for every $h:M\to N$ with $M$ and $N$ of size at most $\mu$ there exists $t:N\to\bar{M}$ such that $th=f$.  If $M$ and $\bar{M}$ are both of size $\mu$, we simply say that $\bar{M}$ is \textit{universal over $M$}.}\end{defi}

The following generalizes a result first proven for AECs in \cite{GV}:

\begin{theo}\label{specimpliesuniv}Let $(\ck,U)$ be a weak $\kappa$-CAEC.  If $m:M\to\bar{M}$ is a $1$-special extension where $M$ and $\bar{M}$ are of size $\mu$ with $\mu^+\sgreat\kappa$ and $\mu\geq\lambda_U$, then $\bar{M}$ is universal over $M$.\end{theo}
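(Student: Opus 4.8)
The plan is to decompose the domain $N$ as a continuous chain over $M$ obtained by adjoining one element at a time, and to map this chain into the special chain $\langle M_i\rangle_{i<\mu}$ witnessing that $\bar{M}$ is a $1$-special extension, thereby producing a coherent cocone whose colimit supplies the desired $t$. Since $\mu\geq\lambda_U$ and $\mu^+\sgreat\kappa$, Remark~\ref{wkcaecfacts}(3) guarantees that $U$ preserves objects of size $\mu$, so $|U(N)|=\mu$. Fixing $h:M\to N$, I would enumerate the elements of $U(N)$ outside the image of $U(h)$ as $\{b_i\mid i<\mu\}$ and let $N_i$ be the subobject of $N$ generated by the image of $M$ together with $\{b_j\mid j<i\}$, so that $N_0=M$ (via $h$) and the chain is continuous. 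Each $N_i$ is generated by at most $\mu$ elements, hence remains of size $\mu$ by \cite{AR}~1.16 together with the finitarity of $\Sigma_U$.

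Next I would build embeddings $s_i:N_i\to M_i$ by induction, which after composition with the colimit injections $M_i\to\bar{M}$ form a cocone into $\bar{M}$. Set $s_0=\id_M:N_0\to M_0$. At a successor stage, $g:M_i\to M_{i+1}$ realizes all types over $M_i$, so by Lemma~\ref{sattrans} applied along $s_i:N_i\to M_i$, the object $M_{i+1}$ realizes all types over $N_i$. In particular it realizes the Galois type of $b_i$ over $N_i$: there is an element $c\in U(M_{i+1})$ and an automorphism of $\monst$ witnessing that $(s_i,b_i)$ and $(gs_i,c)$ are equivalent. Since $N_{i+1}$ is generated by $N_i$ and $b_i$, coherence lets me extend $s_i$ to a morphism $s_{i+1}:N_{i+1}\to M_{i+1}$ sending $b_i$ to $c$ and restricting to $gs_i$ on $N_i$; this is the standard translation of type realization into an extension of maps. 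At a limit stage $\delta$ I set $s_\delta=\colim_{i<\delta}s_i:N_\delta\to M_\delta$, using continuity of both chains.

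The maps $(M_i\to\bar{M})\circ s_i:N_i\to\bar{M}$ are then compatible with the chain $\langle N_i\rangle$, so they assemble into a single morphism $t:N\to\bar{M}$ satisfying $t\circ(N_i\to N)=(M_i\to\bar{M})\circ s_i$. Taking $i=0$ yields $t\circ h=(M_0\to\bar{M})\circ\id_M=m$, so $\bar{M}$ is universal over $M$.

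The main obstacle is this final assembly step: identifying $N$ with $\colim_{i<\mu}N_i$ and producing $t$ as a genuine $\ck$-morphism rather than merely a set map $U(N)\to U(\bar{M})$. Because the chain has length $\mu$, whose cofinality may lie below $\kappa$, $U$ need not preserve this colimit, so one cannot simply invoke concreteness as in the weak $\aleph_0$-CAEC case. Instead I would argue on underlying sets: every element of $U(N)$ lies in some $U(N_i)$, so the compatible family $U(s_i)$ determines a unique map $U(N)\to U(\bar{M})$, and coherence together with the preservation of size-$\mu$ objects by $U$ (again via $\mu\geq\lambda_U$ and $\mu^+\sgreat\kappa$) ensures this underlying set map is carried by a morphism of $\ck$. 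Verifying that the chain genuinely presents $N$ and that the induced map is a $\ck$-morphism is precisely where the hypotheses on $\mu$ and the coherence of $(\ck,U)$ do their essential work.
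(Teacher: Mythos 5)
Your overall strategy---threading $N$ through the special chain one element at a time and invoking coherence at the end---is the right one, and your discussion of the final assembly step matches what the paper actually does. The gap is in how you set up the chain on the $N$ side. You take $N_i$ to be ``the subobject of $N$ generated by the image of $M$ together with $\{b_j\mid j<i\}$,'' appealing to the finitarity of $\Sigma_U$. But a weak $\kappa$-CAEC has, by definition, no replete iso-full embedding into $\Str(\Sigma_U)$ (that axiom is precisely what ``weak'' drops), and more generally no notion of a subobject generated by a subset of $U(N)$; there is no reason the set $U(h)(U(M))\cup\{b_j\mid j<i\}$ should be the underlying set of any subobject of $N$. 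Even if you replace ``generated subobject'' by some hull of size $\mu$ containing that set, such a hull will in general contain many new elements besides $b_i$ at each successor stage, and then your extension step breaks: $M_{i+1}$ realizes only Galois $1$-types over $M_i$ (hence, via Lemma~\ref{sattrans}, over $N_i$), so you can account for the single element $b_i$ but not for the possibly $\mu$-many other elements of $U(N_{i+1})\setminus U(N_i)$. Coherence cannot produce $s_{i+1}:N_{i+1}\to M_{i+1}$ unless the entire image of $U(N_{i+1})$ in $U(\monst)$, under the appropriately twisted embedding, lands inside the image of $U(M_{i+1})$; realizing one $1$-type only guarantees this for $U(N_i)\cup\{b_i\}$.

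The paper avoids this by never decomposing $N$ into subobjects. It tracks only the subsets $X_i=(Uh)(U(M))\cup\{a_k\mid k<i\}$ of $U(N)$, together with \emph{partial set maps} $t_i:X_i\to U(N_i)$ into an auxiliary chain of abstract objects $N_i$ equipped with $\ck$-morphisms $f_i:N_i\to M_i$ and $v_i:N_i\to\monst$; the new object $N_{i+1}$ is merely required to contain the image of $N_i$ and one witness $c$ of the relevant type, and one sets $t_{i+1}=t_i\cup\{(a_i,c)\}$. No morphism out of a subobject of $N$ is ever needed before the very last step, where $X_\mu=U(N)$ and a single application of coherence produces $t$. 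To repair your argument you would need either to adopt this partial-map bookkeeping, or to justify the existence of a continuous chain of subobjects of $N$ adjoining exactly one element of $U(N)$ at a time---which the axioms of a weak $\kappa$-CAEC do not provide. (Two smaller points: the maps $v_i$ into the monster must be chosen compatibly with the automorphisms witnessing type equivalence at each stage, which is the bookkeeping your phrase ``the standard translation'' conceals; and Remark~\ref{wkcaecfacts}(3) yields $|U(N)|\leq\mu$ rather than equality, though this is harmless.)
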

\begin{proof}By assumption, $\bar{M}$ is the colimit of a continuous chain
$$\xymatrix{M=M_0\ar[r]^{m_{0,1}} & M_1\ar[r] & \dots\ar[r] & M_i\ar[r]^{m_{i,i+1}} & M_{i+1}\ar[r] & \dots & \bar{M}}$$
with $i<\mu$, where all Galois $1$-types over $M_i$ are realized in $M_{i+1}$ via the embedding $m_{i,i+1}:M_i\to M_{i+1}$.  Let $h:M\to N$, with $N$ also of size $\mu$.  We must construct an embedding $t$ of $N$ into $\bar{M}$ so that the following triangle commutes:
$$\xymatrix@=3pc{M\ar[rr]^m\ar[drr]_h & & \bar{M}\\
 & & N\ar[u]_t}$$
 We employ a variant of the construction used in the proof of Proposition~6.2 in \cite{LR}, essentially replacing the Galois-saturated model $K$ with the chain $M=M_0\to M_1\to\dots$  To begin, we note that $N$, being of size $\mu$ with $\mu^+\sgreat\kappa$ and $\mu\geq\lambda_U$, must satisfy $|U(N)|\leq\mu$ (see Corollary~4.12 in \cite{LR1}).  Hence we may enumerate $U(N)\setminus (Uh)(U(M))$ as $\{a_i\,|\,i<\mu\}$.  In fact, we define, for each $i<\mu$, 
 $$X_i=(Uh)(U(M))\cup\{a_k\,|\,k<i\}$$
 To facilitate our work with types, we choose, using joint embedding, maps $g_1:\bar{M}\to\monst$ and $g_2:N\to\monst$.  We build the desired map from $N$ to $\bar{M}$ element by element.  In particular, we construct a chain $\langle n_{ij}:N_i\to N_j\,|\,i<j<\mu\rangle$, and $\ck$-morphisms $f_i:N_i\to M_i$ and $v_i:N_i\to\monst$, together with a family of set maps $t_i:X_i\to U(N_i)$.  We begin with $N_0=M=M_0$, $f_0=Id_M$, $v_0=g_1h$, and $t_0=(Uh)^{-1}$.  Abusing notation slightly (the maps within and from the bottom row are set maps; the rest are in $\ck$), we aim to build the following diagram:
 $$\xymatrix@=2pc{ & & \monst & & & & \\
  & & & & & & \\
M_0\ar[r]^{m_{0,1}}\ar@/^1pc/[uurr]^{u_0} & M_1\ar[r]\ar@/^/[uur]_{u_1} & \dots\ar[r] & M_i\ar[r]^{m_{i,i+1}}\ar[uul]_{u_i} & M_{i+1}\ar[r]\ar@/_1.5pc/[uull]_{u_{i+1}} & \dots & \bar{M}\ar@/_1.5pc/[uullll]_{g_2}\\
N_0\ar[r]^{n_{0,1}}\ar[u]^{f_0}\ar@/^1pc/[uuurr]^{v_0} & N_1\ar[r]\ar[u]^{f_1}\ar@/_1pc/[uuur]^{v_1} & \dots\ar[r] & N_i\ar[r]^{n_{i,i+1}}\ar[u]_{f_i}\ar@/^/[uuul]^{v_i} & N_{i+1}\ar[r]\ar[u]_{f_{i+1}}\ar@/_1pc/[uuull]_{v_{i+1}} & \dots & \bar{N}\ar[u]_{\bar{f}}\\
 X_0\ar[u]^{t_0}\ar@{^{(}->}[r] & X_1\ar[u]^{t_1}\ar@{^{(}->}[r] & \dots\ar@{^{(}->}[r] & X_i\ar[u]^{t_i}\ar@{^{(}->}[r] & X_{i+1}\ar[u]^{t_{i+1}}\ar@{^{(}->}[r] & \dots & U(N)\ar[u]_{{t}} }$$
 Here the maps $u_i:M_i\to\monst$ are just the compositions of the colimit maps $M_i\to\bar{M}$ with $g_2:\bar{M}\to\monst$, meaning that the triangles of $u$ and $m$ morphisms commute automatically.  We will ensure that all squares commute and, in addition, that $v_{i+1}n_{i,i+1}=v_i$.
 
Suppose we have constructed up to the $i$th stage, i.e. we have $N_i$, $f_i$, $v_i$, and $t_i$.  If $a_i\in U(N_i)$, we take $N_{i+1}=N_i$, and so on.  Suppose that $a_i\not\in U(N_i)$, and consider the type $(v_i,Ug_1(a_i))$ over $N_i$.  Since we have $f_i:N_i\to M_i$ and $M_{i+1}$ realizes all types over $M_i$, Lemma~\ref{sattrans} implies that this type is realized in $M_{i+1}$.  Hence there is $b\in U(M_{i+1})$ so that $(v_i,Ug_1(a_i))$ is equivalent to $(m_{i,i+1}f,b)$.  That is, there is an automorphism $s$ of $\monst$ with $su_{i+1}m_{i,i+1}f_{i+1}=v_i$ and $U(su_{i+1})(b)=U(g_1)(a_i)$.  As $u_{i+1}m_{i,i+1}=u_i$, we can simplify the first equation to $su_if_i,v_i$.
 Take $N_{i+1}$ of size $\mu$ with $c\in U(N_{i+1})$ and morphisms $n_{i,i+1}:N_i\to N_{i+1}$ and $f_{i+1}:N_{i+1}\to M_{i+1}$ so that $f_{i+1}n_{i,i+1}=m_{i,i+1}f_i$ and $Uf_{i+1}(c)=b$.  Set $v_{i+1}=su_{i+1}f_{i+1}$ (this guarantees that the upper row of squares commutes).  Notice that 
 $$v_{i+1}n_{i,i+1}=su_{i+1}f_{i+1}n_{i,i+1}=su_{i+1}m_{i,i+1}f_i=su_if_i=v_i$$
 so the triangles commute as desired.  Finally, we set $t_{i+1}=t_i\cup\{(a_i,c)\}$.  Notice that 
 $$Uv_{i+1}(c)=U(su_{i+1}f_{i+1})(c)=U(su_{i+1})(b)=Ug_1(a_i)$$
 In fact, then, $(Uv_{i+1})t_{i+1}=Ug_1$.
 
 While we might be inclined to simply take directed colimits in case $i$ is a limit ordinal, there is a slight---very slight---wrinkle: such colimits need not be concrete.  We note, however, that we may take such directed colimits freely in the chains of $N$'s and $M$'s, and must only account for the map $t_i'=\bigcup_{k<i}t_k$ which sends $X_i$ into $\bigcup_{k<i}U(N_k)$, when in fact we would prefer to map $X_i$ into $U(\colim_{k<i}N_k)$.  This is easily fixed, however: simply compose $t_i'$ with the canonical inclusion map from $\bigcup_{k<i}U(N_k)$ into $U(\colim_{k<i}N_k)$ to obtain the desired $t_i$.
 
Passing to the colimit, $(U\bar{v})\bar{t}=Ug_1$ on domain $U(N)$.  By coherence, there is a $\ck$-morphism $t:N\to\bar{M}$ with $Ut=\bar{t}$.  This $t$ is the desired embedding of $N$ into $\bar{M}$ over $M$.\end{proof}
 
This gives, immediately,
 
\begin{coro}\label{exunivext} Let $(\ck,U)$ be a weak $(\kappa,\lambda)$-CAEC.  If $\ck$ is $\mu$-stable for $\mu^+\sgreat\kappa$ and $\mu\geq\lambda+\lambda_U$, then any object $M$ of size $\mu$ has a universal extension.\end{coro}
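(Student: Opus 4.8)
The plan is to assemble Corollary~\ref{exunivext} by chaining together Lemma~\ref{exonespec} and Theorem~\ref{specimpliesuniv}, checking only that the hypotheses line up. Given an object $M$ of size $\mu$, I would first invoke Lemma~\ref{exonespec} to produce a $1$-special extension $m:M\to\bar{M}$. This requires $\mu$-stability with $\mu^+\sgreat\kappa$ and $\mu\geq\lambda$, all of which are among the corollary's hypotheses (since $\mu\geq\lambda+\lambda_U\geq\lambda$). By construction in that lemma, $\bar{M}$ is also of size $\mu$, so $m:M\to\bar{M}$ is an extension with both endpoints of size $\mu$.

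Next I would feed this $1$-special extension into Theorem~\ref{specimpliesuniv}. That theorem demands that $M$ and $\bar{M}$ be of size $\mu$ with $\mu^+\sgreat\kappa$ and $\mu\geq\lambda_U$; the first size condition is guaranteed by the construction above, the inequality $\mu^+\sgreat\kappa$ is hypothesized, and $\mu\geq\lambda_U$ follows from $\mu\geq\lambda+\lambda_U$. The conclusion of Theorem~\ref{specimpliesuniv} is precisely that $\bar{M}$ is universal over $M$ in the sense of the intervening definition, which is exactly the assertion that $M$ has a universal extension.

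There is essentially no obstacle here: the corollary is a direct composition of the two preceding results, and the only content is bookkeeping on the cardinal inequalities. The one point worth stating explicitly is why the sum $\lambda+\lambda_U$ appears in the hypothesis rather than the two bounds $\mu\geq\lambda$ and $\mu\geq\lambda_U$ separately---namely, $\mu\geq\lambda+\lambda_U$ is simply the conjunction of these, packaged so that a single inequality suffices to trigger both the existence of the $1$-special extension (which needs $\mu\geq\lambda$) and its universality (which needs $\mu\geq\lambda_U$). I would write the proof in one or two sentences: apply Lemma~\ref{exonespec} to obtain a $1$-special extension of $M$, necessarily of size $\mu$, and then apply Theorem~\ref{specimpliesuniv} to conclude that it is universal over $M$.
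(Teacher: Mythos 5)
Your proposal is correct and matches the paper exactly: the paper presents this corollary as an immediate consequence of Lemma~\ref{exonespec} followed by Theorem~\ref{specimpliesuniv}, with the hypothesis $\mu\geq\lambda+\lambda_U$ serving precisely to supply both $\mu\geq\lambda$ and $\mu\geq\lambda_U$ as you note. Nothing further is needed.
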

 
Or, more particularly,
\begin{coro}\label{maecsexunivext} Let $\ck$ be an mAEC.  If $\ck$ is $\mu$-stable for $\mu^+\sgreat\aleph_1$ and $\mu\geq\lsdk+\lsdk^{\aleph_0}$, then any model of density character $\mu$ has a universal extension also of density character $\mu$.\end{coro}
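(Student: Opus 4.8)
The plan is to derive this as the metric specialization of Corollary~\ref{exunivext}. The first task is to record the translation of data: every mAEC $\ck$ is a weak $(\aleph_1,\lambda)$-CAEC, its forgetful functor $U:\ck\to\Set$ preserving $\aleph_1$-directed but not arbitrary directed colimits, so that $\kappa=\aleph_1$; and under this identification the internal size of Remark~\ref{wkcaecfacts}(2) is exactly the density character of the underlying metric space. Granting this dictionary, the stability hypothesis and the sharp inequality $\mu^+\sgreat\aleph_1$ transcribe verbatim into the hypotheses of Corollary~\ref{exunivext}, and a universal extension produced there, being of the same size as $M$ by the definition preceding Theorem~\ref{specimpliesuniv}, will have density character $\mu$, as required.

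It remains to reconcile the size bounds, and this is where the genuine work lies. Corollary~\ref{exunivext} requires $\mu\geq\lambda+\lambda_U$, whereas we are handed only $\mu\geq\lsdk+\lsdk^{\aleph_0}$; since $\lambda_U\geq\lambda$ the former is really the demand $\mu\geq\lambda_U$. I would trace back to the single point at which $\lambda_U$ is used, namely the inference $|U(N)|\leq\mu$ in the proof of Theorem~\ref{specimpliesuniv}, and verify it by hand in the metric setting. The controlling estimate is that a complete metric space of density character $\mu$ is the completion of a dense set of cardinality $\mu$ and hence has at most $\mu^{\aleph_0}$ points, so $|U(N)|\leq\mu^{\aleph_0}$. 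The sharp inequality $\mu^+\sgreat\aleph_1$ forces $\nu^{\aleph_0}\leq\mu$ for every $\nu\leq\mu$, and in particular $\mu^{\aleph_0}=\mu$; combined with $\mu\geq\lsdk$ this yields both $|U(N)|=\mu$ and $\lsdk^{\aleph_0}\leq\mu^{\aleph_0}=\mu$, so that the metric bound $\mu\geq\lsdk+\lsdk^{\aleph_0}$ is in fact automatic and delivers precisely the cardinality control that $\mu\geq\lambda_U$ was meant to supply.

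With $|U(N)|\leq\mu$ secured directly, the argument of Theorem~\ref{specimpliesuniv} goes through unchanged (and before it Lemma~\ref{exonespec}, which needs only $\mu\geq\lsdk$ so that size-$\mu$ objects exist together with the automatic preservation of $\mu^+$-directed colimits by $U$, since $\mu^+\geq\aleph_1$), so Corollary~\ref{exunivext}---or rather its proof---applies and furnishes the desired universal extension of density character $\mu$. I expect the main obstacle to be exactly this bookkeeping around $\lambda_U$: the abstract threshold governing uniform preservation of all $\mu$-sized objects can strictly exceed $\lsdk^{\aleph_0}$, so one cannot simply quote $\mu\geq\lambda+\lambda_U$, and must instead exploit the sharp inequality to see that the only consequence of $\lambda_U$ actually needed, the equality $|U(N)|=\mu$, is already guaranteed by $\mu^+\sgreat\aleph_1$ together with the metric-completion bound.
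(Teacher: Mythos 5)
Your proposal is correct and follows the paper's (implicit) argument: the corollary is presented as a direct specialization of Corollary~\ref{exunivext} to mAECs regarded as weak $(\aleph_1,\lambda)$-CAECs, with the identification of internal size with density character and the value of $\lambda_U$ imported from \cite{LR1}. Your additional step---verifying $|U(N)|\leq\mu$ directly from the metric-completion bound $|U(N)|\leq\mu^{\aleph_0}$ and the arithmetic consequence $\mu^{\aleph_0}=\mu$ of $\mu^+\sgreat\aleph_1$, rather than quoting the computation of $\lambda_U$ for mAECs---is sound and simply makes the deduction more self-contained than the paper's, which cites \cite{LR1} for these facts.
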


We note that Theorem 2.11 in \cite{Z} establishes the latter result given stability in any $\mu$: our methods, which are fundamentally discrete rather than metric, yield a weaker result in this case.
 
 \section{Weak $\aleph_0$-CAECs: Saturation of EM-objects}\label{alephz}
 
Recall that, by Remark~2.8 in \cite{LR}, any coherent accessible category with directed colimits $\ck$ whose morphisms are monomorphisms---and therefore any weak $(\kappa,\lambda)$-CAEC---admits an EM-functor $E:\Lin\to\ck$ that is faithful, preserves directed colimits, and preserves sizes starting with some cardinal $\lambda_E$.  In this section, we focus on the conditions on $\ck$ and on a linear order $I$ that ensure the EM-object $E(I)$ is $\mu$-saturated.

The following technical result concerning the EM-functor is an approximation of Lemma~10.11 in \cite{Ba} in the context of weak $\aleph_0$-CAECs: for emphasis, we note again that these are equivalent to the coherent accessible categories with concrete directed colimits considered in \cite{LR}.  Notice that this proof removes any reference to terms and signatures: purely category-theoretic properties of the functor $E$ are sufficient.

\begin{propo}\label{emsat}Let $\ck$ be a weak $\aleph_0$-CAEC.  Suppose $\ck$ is $\lambda$-categorical for regular $\lambda>\lambda_U+\lambda_E$.  For any linear order $J$ with $\lambda_E<|J|<\lambda$ and the property that $J$ contains an increasing sequence of length $\theta^+$ for all $\lambda_E+\lambda_U<\theta<|J|$, $E(J)$ is saturated.\end{propo}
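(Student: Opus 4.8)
The plan is to verify saturation of $E(J)$ directly, by realizing every Galois type over every subobject of size strictly below $|E(J)|$. Since $E$ preserves sizes from $\lambda_E$ onward and $|J|>\lambda_E$, we have $|E(J)|=|J|$, so this is precisely what saturation of $E(J)$ demands. Fix, then, a cardinal $\theta$ with $\lambda_E+\lambda_U<\theta<|J|$, an object $M$ of size $\theta$ together with a map $g:M\to E(J)$, and a type $p$ over $M$; types over subobjects of size at most $\lambda_E+\lambda_U$ are absorbed by first embedding such a subobject into one of size $\theta$ and appealing to Lemma~\ref{sattrans}. Because $\ck$ is a weak $\aleph_0$-CAEC the functor $U$ preserves \emph{all} directed colimits, and $E$ preserves directed colimits; exhibiting $J$ as the directed colimit of its finite suborders therefore presents $E(J)$ as $\colim_{J'\subseteq J}E(J')$, and $\theta^+$-presentability of $M$ lets $g$ factor through $E(J_0)$ for a suborder $J_0\subseteq J$ of size $\theta$. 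This $J_0$ records the support to which $M$ is pinned.

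First I would produce a realization in a large saturated model and then transport it back into $E(J)$. Choose a linear order $I$ of size $\lambda$ containing $J$ as a suborder; since $|I|=\lambda>\lambda_E$ we have $|E(I)|=\lambda$, and $\lambda$-categoricity together with the saturation theorem recalled at the close of Section~2 (Corollary~7.9 of \cite{LR}) makes $E(I)$ the saturated model of size $\lambda$. As $\theta<\lambda$, the type $p$ over $M$ is realized in $E(I)$, say by $a\in U(E(I))$; and since directed colimits remain concrete, $U(E(I))=\colim_{I'\subseteq I}U(E(I'))$ over finite $I'$, so in fact $a\in U(E(I_0))$ for some \emph{finite} suborder $I_0\subseteq I$. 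The transport mechanism is the equivariance of $E$: an order-automorphism $\sigma$ of $I$ induces an automorphism $E(\sigma)$ of $E(I)$, and if $\sigma$ fixes $J_0$ pointwise then $E(\sigma)$ restricts to the identity on the subobject $E(J_0)$, hence fixes the image of $M$. By the homogeneity already used in Lemma~\ref{sattrans}, such an $E(\sigma)$ carries the realization $a$ to a further realization of $p$ over $M$, which by naturality of $E$ lies in $U(E(\sigma(I_0)))$. It therefore suffices to exhibit an order-automorphism $\sigma$ of $I$, fixing $J_0$ pointwise, with $\sigma(I_0)\subseteq J$: for then $b:=E(\sigma)(a)\in U(E(J))$ realizes $p$, as required.

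The heart of the matter, and the step I expect to be the main obstacle, is this final order-theoretic claim; it is exactly here that the hypothesis on $J$ is spent. The finitely many points of $I_0\setminus J_0$ occupy finitely many Dedekind cuts of $J_0$, and to relocate them inside $J$ while fixing $J_0$ one needs $J$ to supply room in each of those cuts beyond what $J_0$ itself contributes. I would first enlarge $J_0$---harmlessly, again by Lemma~\ref{sattrans}---so as to absorb an initial segment of length $\theta$ of the given increasing sequence; the remaining tail of that sequence, of length $\theta^+>\theta=|J_0|$, then provides cofinally many fresh points of $J$ lying above this segment, and it is precisely this surplus, guaranteed by the assumed increasing sequence of length $\theta^+$, that lets the finite cut-pattern of $I_0$ be reproduced inside $J$ over $J_0$. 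The delicate bookkeeping is to arrange a \emph{single} order-automorphism that simultaneously fixes all of $J_0$ and sends every point of $I_0$ into an available slot of $J$; once such a $\sigma$ is in hand, functoriality of $E$ and the homogeneity recorded above close the argument exactly as above, and as $\theta$, $M$, and $p$ were arbitrary, $E(J)$ is saturated.
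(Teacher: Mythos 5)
Your overall strategy --- realize the type in a saturated EM-object of size $\lambda$, locate the realizing element in a finite sub-EM-object, and transport it back into $E(J)$ by an order map fixing the support of $M$ --- is the same as the paper's. But there is a genuine gap at exactly the step you flag as the heart of the matter, and it is caused by your choice of the ambient order. You take $I$ to be an \emph{arbitrary} linear order of size $\lambda$ containing $J$. The finite support $I_0$ of the realizing element can then occupy arbitrary cuts of $J$, including gaps where $J$ (not merely $J_0$) has no points at all: for instance, if $J$ is a well-order and $I$ places $\lambda$ new points below all of $J$, a realization supported partly on those points cannot be relocated into $J$ by any order map fixing $J_0$. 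The hypothesis on $J$ only supplies increasing sequences of length $\theta^+$; it says nothing about every cut being fat, so your plan of ``absorbing an initial segment of the sequence and using the tail'' only creates room \emph{above} that segment and cannot reach points of $I_0$ sitting elsewhere. The paper avoids this entirely by \emph{engineering} the large order: it sets $J'=J_0+\lambda$ with $J_0\subseteq J$ the increasing $\theta^+$-sequence, so that every new point lies above $J_0$; the finite support then splits as a piece inside $J_0$ (already in $J$, absorbed into a set $K$ of size $\theta$) plus a finite piece at the top, and the tail of the $\theta^+$-sequence beyond $K$ is guaranteed to contain a copy of that top piece. (Note $E(J')$ need not contain $E(J)$; the paper gets the embedding $g:M_0\to N$ from categoricity and joint embedding rather than from an inclusion $J\subseteq J'$.)

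A secondary, smaller problem: you ask for an order-\emph{automorphism} $\sigma$ of $I$ fixing $J_0$ pointwise with $\sigma(I_0)\subseteq J$. This is stronger than needed and may fail even when relocation is possible, since an automorphism must be a bijection of all of $I$. The paper only needs an order-isomorphism between two small suborders, $K+J_{i,2}'\cong K+J''$ over $K$, which $E$ converts into an isomorphism of the corresponding EM-subobjects over $E(K)$; the extension to an automorphism of the monster (needed to witness equivalence of Galois types) then comes from homogeneity of $\monst$, not from any automorphism of the index order. If you replace your arbitrary $I$ by $J_0+\lambda$ and weaken the automorphism requirement to a finite-support isomorphism over $K$, your argument becomes the paper's.
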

\begin{proof} Let $J$ be a linear order with the property described above, and let $M=E(J)$.  We wish to show that for any $\theta$ with $\lambda_E+\lambda_U<\theta<|J|$, $M$ is $\theta^+$-saturated.  Take such a $\theta$.  By assumption, there is $J_0\subseteq J$ of order type $\theta^+$.  Define 
$$J'=J_0+\lambda$$
as the ordered sum.  Then $|J'|=\lambda$ and since $\lambda>\lambda_E$, the object $N=E(J')$ is of size $\lambda$.  Hence it is isomorphic to the categorical object---by Corollary~7.9 in \cite{LR}, $N$ is therefore saturated.

Consider a subobject $f:M_0\to M$, with $M_0$ of size less than $\theta$.  By categoricity and joint embedding, there is an embedding $g:M_0\to N$.  Moreover, because $N$ is saturated, any type over $M_0$ is equivalent to $(g,b)$ for some $b\in U(N)$.  Notice that we may express $J'$ as the directed colimit of its finite suborders, $J'=\colim_{i\in I} J_i'$.  Since both $E$ and $U$ preserve directed colimits,
$$U(N)=UE(J')=UE(\colim_{i\in I} J_i')=\bigcup_{i\in I} UE(J_i')$$
In particular, $b\in UE(J_i')$ for some $i\in I$.  As a finite suborder of $J'$, $J_i'=J_{i,1}'+J_{i,2}'$, where $J_{i,1}'\subseteq J_0$ and $J_{i,2}'\subseteq\lambda$, and both are finite.  Take $K$ with $J_{i,1}'\subseteq K\subseteq J$ and $|K|=\theta$ so that $Uf(U(M_0))\subseteq UE(K)$.  As $J_0$ is of order type $\theta^+$, there is room to choose $J''\subseteq J_0$ with $K+J_{i,2}'$ order-isomorphic to $K+J''$ over $K$.  This induces an isomorphism $E(K+J_{i,2})\to E(K+J'')$ over $E(K)$, hence over $Uf(UM_0)$.  The image of $b$ under this isomorphism---call it $a$---lies in $M$ and satisfies $(f,a)\sim(g,b)$.\end{proof}

By a slight generalization of this argument, we also have:

\begin{propo}\label{emmusat} Let $\ck$ be a weak $\aleph_0$-CAEC.  Suppose $\ck$ is $\lambda$-categorical for regular $\lambda>\lambda_U+\lambda_E$.  For any linear order $J$ with $\lambda_E<|J|<\lambda$ and the property that $J$ contains an increasing sequence of length $\theta^+$ for all $\lambda_E+\lambda_U<\theta<\mu$ for some $\mu\leq|J|$, $E(J)$ is $\mu$-saturated.\end{propo}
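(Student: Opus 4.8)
The plan is to run the proof of Proposition~\ref{emsat} essentially word for word, with the single modification that the cardinal $\theta$ is now confined to the interval $\lambda_E+\lambda_U<\theta<\mu$ rather than $\lambda_E+\lambda_U<\theta<|J|$. Since $\mu$-saturation of $M=E(J)$ amounts to realizing every type over a subobject of size $<\mu$, and since each such subobject has size $\leq\theta$ for some $\theta$ in the admissible interval --- taking $\theta=\nu$ when $\mu=\nu^+$ is a successor, and any $\theta$ cofinally below $\mu$ when $\mu$ is a limit --- it suffices to establish $\theta^+$-saturation of $M$ for every $\theta$ with $\lambda_E+\lambda_U<\theta<\mu$. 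Fix such a $\theta$.

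For this fixed $\theta$ the construction is identical to the one in Proposition~\ref{emsat}. Using the hypothesis that $J$ contains an increasing sequence of length $\theta^+$ (available precisely because $\theta<\mu$), choose $J_0\subseteq J$ of order type $\theta^+$, form the ordered sum $J'=J_0+\lambda$, and set $N=E(J')$; as before $|J'|=\lambda$ forces $N$ to have size $\lambda$, so $N$ is isomorphic to the categorical object and hence saturated by Corollary~7.9 in \cite{LR}. Given $f:M_0\to M$ with $M_0$ of size at most $\theta$, amalgamate $M_0$ into $N$ by some $g:M_0\to N$ and realize an arbitrary type over $M_0$ as $(g,b)$ with $b\in U(N)$. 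Writing $J'$ as the directed colimit of its finite suborders and using that $E$ and $U$ preserve directed colimits, locate $b$ in $UE(J_i')$ for a finite $J_i'=J_{i,1}'+J_{i,2}'$ with $J_{i,1}'\subseteq J_0$ and $J_{i,2}'\subseteq\lambda$. Choose $K$ with $J_{i,1}'\subseteq K\subseteq J$, $|K|=\theta$, and $Uf(U(M_0))\subseteq UE(K)$ --- possible since $\theta<\mu\leq|J|$ --- and exploit the order-type room in $J_0$ to produce $J''\subseteq J_0$ with $K+J_{i,2}'\cong K+J''$ over $K$. Transporting $b$ across the induced isomorphism $E(K+J_{i,2}')\to E(K+J'')$ yields $a\in U(M)$ with $(f,a)\sim(g,b)$, realizing the type inside $M$.

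The essential observation --- and the only point requiring genuine verification --- is that the linear order $J$ enters the proof of Proposition~\ref{emsat} in exactly two ways: through the existence of a suborder of order type $\theta^+$ and through the existence of a suborder $K$ of size $\theta$ containing the finite trace $J_{i,1}'$ and the image of $M_0$. Both are guaranteed for every $\theta<\mu$ by the weakened hypothesis on $J$ together with the inequality $\mu\leq|J|$, which ensures $\theta<|J|$; no increasing sequence of length $\theta^+$ with $\theta\geq\mu$ is ever invoked. Consequently the restricted hypothesis delivers exactly $\theta^+$-saturation for $\theta<\mu$, and nothing more.

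I do not expect any serious difficulty, since the whole argument is a direct specialization of the preceding one. The only thing to get right is the cardinal bookkeeping in the reduction of $\mu$-saturation to the family of $\theta^+$-saturation statements --- in particular the treatment of a successor $\mu=\nu^+$, where realization of types over subobjects of size exactly $\nu$ must come from the admissible value $\theta=\nu$, and the treatment of subobjects of size at most $\lambda_E+\lambda_U$, which are subsumed by any single admissible $\theta$ (or, if one prefers, handled by Lemma~\ref{sattrans} after embedding into an object of size $\theta$ in the admissible range).
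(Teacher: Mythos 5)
Your proposal is correct and matches the paper's intent exactly: the paper offers no separate proof, stating only that the result follows ``by a slight generalization of this argument,'' i.e.\ by rerunning the proof of Proposition~\ref{emsat} with $\theta$ confined to the interval $\lambda_E+\lambda_U<\theta<\mu$, which is precisely what you do. Your verification that $J$ enters only through the order-type-$\theta^+$ suborder and the size-$\theta$ suborder $K$, together with the bookkeeping reducing $\mu$-saturation to the family of $\theta^+$-saturation statements, is exactly the content the paper leaves implicit.
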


We note that these propositions tell us a great deal about the saturation of EM-objects based entirely on the properties of the associated linear orders.  The following is, essentially, Corollary~10.14 in \cite{Ba}:

\begin{coro}\label{emsatex}Let $\ck$ be a weak $\aleph_0$-AEC, and suppose that $\ck$ is $\lambda$-categorical for some regular $\lambda>\lambda_U+\lambda_E$.
\begin{enumerate}\item For any $\mu$ with $\lambda_E+\lambda_U<\mu\leq\lambda$, $E(\mu)$ and $E(\mu^{<\omega})$ are saturated.\\
\item If $E(I_0)$ is $\mu$-saturated with $I_0$ a linear order satisfying the conditions of Proposition~\ref{emmusat}, then for any extension $I$ of $I_0$, $E(I)$ is $\mu$-saturated as well.\end{enumerate}\end{coro}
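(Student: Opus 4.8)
The plan is to reduce each part to the two propositions just proved. For Part~(1) with $\mu<\lambda$, I would apply Proposition~\ref{emsat} directly to the linear orders $\mu$ and $\mu^{<\omega}$. Both have cardinality $\mu$, so the condition $\lambda_E<|J|<\lambda$ holds, and for every $\theta$ with $\lambda_E+\lambda_U<\theta<\mu$ we have $\theta^+\leq\mu$: the first $\theta^+$ ordinals give the required increasing sequence in $\mu$, while the singleton sequences form a copy of $\mu$ inside $\mu^{<\omega}$ and hence furnish the same increasing sequences there. Proposition~\ref{emsat} then yields that $E(\mu)$ and $E(\mu^{<\omega})$ are saturated. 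The remaining case $\mu=\lambda$ falls outside Proposition~\ref{emsat}, which requires $|J|<\lambda$; here I would instead observe that $E(\lambda)$ and $E(\lambda^{<\omega})$ both have size $\lambda$ (as $\lambda>\lambda_E$ and $E$ preserves sizes from $\lambda_E$ on), so by $\lambda$-categoricity each is isomorphic to the categorical object, which is saturated by Corollary~7.9 in \cite{LR}.

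The content of Part~(2) is precisely to discharge the cardinality hypothesis $|J|<\lambda$ of Proposition~\ref{emmusat}, which can fail for a large extension $I\supseteq I_0$, by exploiting the fact that $\mu$-saturation is a local property. Let $f:M_0\to E(I)$ be a subobject with $M_0$ of size less than $\mu$, and let $p$ be a type over $M_0$. Since $\mu>\lambda_U$, Corollary~4.12 in \cite{LR1} together with the size-preservation facts of Remark~\ref{wkcaecfacts} shows that $|U(M_0)|<\mu$; as $E$ and $U$ preserve directed colimits, $UE(I)=\bigcup_{I'} UE(I')$ ranges over the finite suborders $I'$ of $I$, and so $(Uf)(U(M_0))$ is contained in $UE(I')$ for a single suborder $I'\subseteq I$ with $|I'|<\mu$.

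Now set $I_1=I_0\cup I'$, taken as a suborder of $I$. Because $I_0\subseteq I_1$, the order $I_1$ still contains an increasing sequence of length $\theta^+$ for every $\lambda_E+\lambda_U<\theta<\mu$, and since $|I'|<\mu\leq|I_0|$ we have $|I_1|=|I_0|$, whence $\lambda_E<|I_1|<\lambda$. Thus $I_1$ satisfies the hypotheses of Proposition~\ref{emmusat} with the same $\mu$, and $E(I_1)$ is $\mu$-saturated. Since $(Uf)(U(M_0))\subseteq UE(I_1)$, coherence produces a $\ck$-morphism $f_1:M_0\to E(I_1)$ with $E(\iota)f_1=f$, where $\iota:I_1\to I$ is the inclusion. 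The type $p$, transported along $f_1$, is realized in $E(I_1)$ by $\mu$-saturation, and its image under $E(\iota)$ realizes $p$ in $E(I)$.

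The main obstacle is the choice of the intermediate order $I_1$ in the third paragraph: it must simultaneously absorb the image of $M_0$, inherit from $I_0$ the long increasing sequences demanded by Proposition~\ref{emmusat}, and keep its cardinality strictly below $\lambda$. Once $I_1$ is in hand, the factorization through $E(I_1)$ via coherence and the transport of the type are routine, mirroring the closing step of the proof of Theorem~\ref{specimpliesuniv}.
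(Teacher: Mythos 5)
Your argument is correct and is essentially the intended one: the paper states this corollary without proof (deferring to Baldwin's Corollary~10.14), and your derivation---direct application of Propositions~\ref{emsat} and~\ref{emmusat} for part~(1), with the categoricity-plus-size-preservation argument covering the boundary case $\mu=\lambda$, and the locality argument through an intermediate suborder $I_1=I_0\cup I'$ of cardinality $|I_0|<\lambda$ for part~(2)---is exactly the standard way to fill it in. I see no gaps.
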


\section{Weak $\aleph_0$-CAECs: Limit Objects and Saturation}
 
We now define the notion of a limit model in our context.

\begin{defi}{\em Let $\langle M_i\to M_j\,|\,i\leq j<\delta\rangle$ be a continuous chain of objects of size $\mu$ with $\delta\leq\mu^+$.  We say that it is a \textit{$(\mu,\delta)$-chain} if $M_{i+1}$ $\mu$-universal over $M_i$.  In case $\delta$ is a limit ordinal, we say the colimit $\bar{M}$ is a \textit{$(\mu,\delta)$-limit object}.}\end{defi}

An analogue of saturated models, limit models were introduced in \cite{S}, and have subsequently been used in a number of contexts within abstract model theory: in the analysis of categoricity in AECs with no maximal models (\cite{SV} and \cite{Vn}), for example.  In particular, the uniqueness of limit models has been used heavily in the development of the stability theory of AECs and mAECs (e.g. in \cite{BG} and \cite{ViZ}, respectively).  This uniqueness appears as an axiom for good frames in \cite{S}, and has been proven from superstability-like assumptions in AECs and mAECs (e.g. in \cite{BG} and \cite{ViZ1}, respectively).  Here, we require only a very mild version of uniqueness:

\begin{lemma}\label{uniqlim} Let $(\ck,U)$ be a weak $\aleph_0$-CAEC.  Let $\delta$ be a limit ordinal with $\delta<\mu^+$.  Let $\bar{M}$ and $\bar{N}$ be $(\mu,\mu\times\delta)$-limits over $M_0$ and $N_0$, respectively, with $M_0\cong N_0$.  Then $\bar{M}\cong\bar{N}$ and, moreover, the chains are term-by-term isomorphic on all terms indexed by $\alpha\times i$ with $\alpha\leq\mu$ a limit ordinal and $i<\delta$.\end{lemma}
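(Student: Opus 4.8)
The plan is to construct an isomorphism $\bar M\to\bar N$ by a back-and-forth (zigzag) argument whose only real input is that each successor term of a $(\mu,\mu\times\delta)$-chain is $\mu$-universal over its predecessor. First I would organize the two long chains into blocks: for $i<\delta$ set $P_i=M_{\mu\times i}$ and $Q_i=N_{\mu\times i}$. Since the ordinals $\mu\times i$ ($i<\delta$) are continuous and cofinal in $\mu\times\delta$, we have $\bar M=\colim_{i<\delta}P_i$ and $\bar N=\colim_{i<\delta}Q_i$; moreover each $P_i$ (resp.\ $Q_i$) is a colimit of fewer than $\mu^+$ objects of size $\mu$ and is hence itself of size $\mu$, by the same appeal to 1.16 of \cite{AR} used in Lemma~\ref{exonespec}. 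One also checks, using $\mu$-universality of the first successor $M_{\mu\times i+1}$ over $M_{\mu\times i}$, that $P_{i+1}$ is $\mu$-universal over $P_i$, so the block endpoints already form a $(\mu,\delta)$-limit; but to obtain the ``moreover'' I keep the finer block structure visible.

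The core is an induction on $i<\delta$ producing isomorphisms $h_i\colon P_i\to Q_i$ compatible with the chain maps. Given $h_i$, I would extend it to $h_{i+1}\colon P_{i+1}\to Q_{i+1}$ by interleaving the two length-$\mu$ subchains filling the $i$th block. Writing the $M$-terms of the block as $m_0\to m_1\to\cdots$ (with $m_0=P_i$) and the $N$-terms as $n_0\to n_1\to\cdots$ (with $n_0=Q_i$), I start from $h_i\colon m_0\cong n_0$ and alternate: ``going forth,'' $\mu$-universality of $n_{k+1}$ over $n_k$ embeds $m_{k+1}$ into $n_{k+1}$ over the maps built so far; ``going back,'' $\mu$-universality of $m_{k+1}$ over $m_k$ embeds $n_k$ into $m_{k+1}$, with the composite forced to equal the chain map $m_k\to m_{k+1}$ (here I use that $\mu$-universality provides, for every size-${\le}\mu$ extension, a factorization through the given subobject). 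This yields a commuting ladder of embeddings whose two rails are the two block-subchains. At each limit stage $\alpha\le\mu$ the partial ladder has incorporated exactly the first $\alpha$ terms of each side, so by continuity of both chains it closes up to an isomorphism between the $\alpha$th term of the $M$-block and the $\alpha$th term of the $N$-block; at $\alpha=\mu$ this gives the desired $h_{i+1}\colon P_{i+1}\cong Q_{i+1}$ extending $h_i$.

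At a limit $i<\delta$ I would take the colimit of the $h_j$ ($j<i$); continuity of the two outer chains turns this into an isomorphism $h_i\colon P_i\cong Q_i$, and compatibility is inherited from the earlier stages. Passing to the colimit of the coherent family $\langle h_i\rangle$ produces the isomorphism $\bar M\cong\bar N$, which by construction restricts to an isomorphism between the $\alpha$th terms of the $i$th blocks for every limit ordinal $\alpha\le\mu$ and every $i<\delta$, giving the asserted term-by-term statement.

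The main obstacle is bookkeeping rather than any deep ingredient: I must keep the zigzag \emph{strictly} commuting with all chain maps, so that the interleaved ladder is a single chain cofinal in both block-subchains and its colimit is canonically identified with both $P_{i+1}$ and $Q_{i+1}$ (not merely abstractly isomorphic to each), and I must verify continuity at every limit stage---both of the inner $\alpha$-induction and of the outer $i$-induction---so that the isomorphisms glue. This is where the size control (a colimit of $<\mu^+$ objects of size $\mu$ stays of size $\mu$, so universality keeps applying) and the fact that all morphisms are monomorphisms do the real work, the latter guaranteeing that the embeddings compose unambiguously and that there is no obstruction to gluing. Note finally that the alternation is cofinal in each block even when $\mu$ is singular, since each block has order type $\mu$ and the ladder visits every term.
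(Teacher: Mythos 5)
Your proposal is correct and follows essentially the same route as the paper: a back-and-forth along the $\mu\times\delta$ chain in which each zig and zag is supplied by $\mu$-universality of the next successor term, and the interleaved ladder closes up to an isomorphism at each limit stage because the two rails are mutually cofinal and the induced maps on the colimits are mutually inverse (the paper verifies this last point explicitly via joint epimorphy of the colimit cocones, which is the one computation you gesture at rather than carry out). Your block decomposition into $P_i=M_{\mu\times i}$ is only a bookkeeping reorganization of the paper's single transfinite zigzag.
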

\begin{proof} The proof proceeds by a back-and-forth construction.  As the current context is slightly exotic, we sketch the argument.  In particular, we will consider the chain up to $M_{\omega,0}$, the first interesting case.  For notational simplicity, we omit the second subscript in writing the chain maps $\phi_{i,j}^M:M_{i,0}\to M_{j,0}$ and $\phi_{i}^M:M_{i,0}\to\bar{M}$, as well as the chain maps $\phi_{i,j}^N$ and $\phi_j^N$, and the system of back and forth maps.  Let $f_{0}:M_{0,0}\to N_{0,0}$ be an isomorphism.  By universality of $M_{1,0}$ over $M_{0,0}$, there is a morphism $g_1:N_{0,0}\to M_{1,0}$ such that $g_1f_0=\phi_{0,1}^M$.  By universality of $N_{1,0}$ over $N_{0,0}$, there is a morphism $f_1:M_{1,0}\to N_{1,0}$ such that $f_1g_1=\phi_{0,1}^N$.  The process continues, yielding the following diagram in which all triangles commute:
$$\xymatrix{M_{0,0}\ar[d]_{f_0}\ar[r]^{\phi_{0,1}^M} & M_{1,0}\ar[r]^{\phi_{1,2}^M}\ar[d]^{f_1} & M_{2,0}\ar[d]^{f_2} \dots & M_{i,0}\ar[r]^{\phi_{i,i+1}^M}\ar[d]^{f_i} & M_{i+1,0}\ar[d]^{f_{i+1}}\dots & M_{\omega,0}\ar@/_/[d]_{f_\omega} \\
N_{0,0} \ar[r]_{\phi_{0,1}^N}\ar[ur]_{g_1} & N_{1,0}\ar[r]_{\phi_{1,2}^N}\ar[ur]_{g_2} & N_{2,0} \dots & N_{i,0}\ar[r]_{\phi_{i,i+1}^N}\ar[ur]_{g_{i+1}} & N_{i+1,0}\dots & N_{\omega,0}\ar@/_/[u]_{g_\omega}}$$
Here $f_{\omega}:M_\omega\to N_\omega$ is the map induced by the morphisms $\phi_{i,\omega}^N f_i:M_i\to N_\omega$, and $g_\omega:N_\omega\to M_\omega$ is the map induced by the morphisms $\phi_{i+1,\omega}g_{i+1}:N_i\to M_\omega$.  Consequently, we have
$$\phi_{i,\omega}^Nf_{i}=f_{\omega}\phi_{i,\omega}^M\hspace{10 mm}\hspace{10 mm}\phi_{i+1,\omega}^Mg_{i+1}=g_{\omega}\phi_{i,\omega}^N$$
as well as 
$$g_{i+1}f_i=\phi_{i,i+1}^M\hspace{20 mm}f_{i+1}g_{i+1}=\phi_{i,i+1}^N$$
by construction.  We claim that $f_\omega$ and $g_\omega$ are inverses.  To begin, we compute:
$$\begin{array}{rcl}\phi_{i+1,\omega}^Mg_{i+1} & = & g_{\omega}\phi_{i,\omega}^N\vspace{2 mm}\\
\phi_{i+1,\omega}^Mg_{i+1}f_i & = & g_{\omega}\phi_{i,\omega}^Nf_i\vspace{2 mm}\\
\phi_{i+1,\omega}^M\phi_{i,i+1}^M & = & g_{\omega}f_\omega\phi_{i,\omega}^M\vspace{2 mm}\\
\phi_{i,\omega}^M & = & g_{\omega}f_\omega\phi_{i,\omega}^M\end{array}$$
Since the colimit maps $\phi_{i,\omega}^M$ form a jointly epimorphic family, it follows that 
$$g_{\omega}f_{\omega}=\mbox{Id}_{M_\omega}$$
Similarly, $f_\omega g_\omega\phi_{i-1,\omega}^N=\phi_{i-1,\omega}^N$.  Since the $\phi_{i-1,\omega}^N$ are jointly epimorphic as well, we have
$$f_{\omega}g_{\omega}=\mbox{Id}_{N_\omega}$$
Beyond stage $\omega\times 0$, we resume the back-and-forth sequence, and continue this process.\end{proof}

In case $\ck$ is categorical this gives a characterization of limits over EM-objects, which slightly generalizes Lemma~10.16(3) in \cite{Ba}.

\begin{lemma}\label{emlims}Let $(\ck,U)$ be a weak $\aleph_0$-CAEC that is $\lambda$-categorical for regular $\lambda>\lambda_U+\lambda_E$, and let $\lambda_E<\mu<\lambda$.  Let $\delta<\mu^+$ be a limit ordinal and let $I=\mu^{<\omega}$.  Every $(\mu,\delta)$-chain over $E(I)$ is isomorphic to $E(I\times\delta)$.\end{lemma}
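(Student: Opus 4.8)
The plan is to exhibit one canonical $(\mu,\delta)$-chain over $E(I)$ whose colimit is manifestly $E(I\times\delta)$, and then to invoke uniqueness of limits to absorb every other $(\mu,\delta)$-chain. I order $I\times\delta$ as $\delta$ consecutive copies of $I$, so that $I\times i$ is an initial segment of $I\times\delta$ for every $i\leq\delta$, with $I\times(i+1)=(I\times i)+I$ and $I\times\delta=\bigcup_{i<\delta}(I\times i)$. As $\mu>\lambda_E$, the functor $E$ preserves the size $\mu$, so each $E(I\times(1+i))$ has size $\mu$; as $E$ preserves directed colimits, $\langle E(I\times(1+i))\mid i<\delta\rangle$ is a continuous chain with first term $E(I)$ (recall $1+\delta=\delta$) and colimit $E(I\times\delta)$.

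Next I would show that each step of this chain, namely $E(J)\to E(J+\mu^{<\omega})$ for $|J|=\mu$, is a universal extension; granting this, the chain above is a genuine $(\mu,\delta)$-chain and $E(I\times\delta)$ is a $(\mu,\delta)$-limit over $E(I)$. To establish universality I would realize $E(J+\mu^{<\omega})$ as a $1$-special extension of $E(J)$ and appeal to Theorem~\ref{specimpliesuniv}. The mechanism is the one in the proof of Proposition~\ref{emsat}: $\mu$-stability (available from $\lambda$-categoricity in this range) bounds the number of types over $E(J)$ by $\mu$, and each such type is realized inside $E(J+\mu^{<\omega})$ by transporting an order-isomorphism $J+J''\cong J+(\text{an initial segment of }\mu^{<\omega})$ that fixes $J$ through the functor $E$, consuming only a small piece of the appended copy. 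Arranging these realizations into a continuous chain of length $\mu$ over $E(J)$, each successor realizing all types over its predecessor and the colimit being $E(J+\mu^{<\omega})$, is exactly a $1$-special presentation.

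Finally, let $\langle M_i\mid i<\delta\rangle$ be an arbitrary $(\mu,\delta)$-chain over $M_0\cong E(I)$, with colimit $\bar M$. Both this chain and the canonical chain $\langle E(I\times(1+i))\rangle$ have size-$\mu$ terms and $\mu$-universal successors, which is all that the back-and-forth construction in the proof of Lemma~\ref{uniqlim} actually uses; running that construction between the two chains (interleaving forth-steps via universality of the $E(I\times(1+i))$ and back-steps via universality of the $M_i$, taking colimits at limit stages) produces an isomorphism of colimits, so $\bar M\cong E(I\times\delta)$.

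The main obstacle is the second paragraph: the universality of the single step $E(J)\to E(J+\mu^{<\omega})$. The subtlety is that one needs \emph{all} types over $E(J)$ realized, not merely types over subsets of size $<\mu$ (which is what bare $\mu$-saturation of $E(J+\mu^{<\omega})$ would yield); the $1$-special reformulation feeding Theorem~\ref{specimpliesuniv} is exactly what closes this gap. One must also confirm that the hypotheses $\lambda_E<\mu<\lambda$, together with the standing assumptions, genuinely supply the $\mu$-stability and the size conditions ($\mu\geq\lambda_U$ and $\mu^+\sgreat\aleph_0$) required by Theorem~\ref{specimpliesuniv} and by the saturation results invoked along the way.
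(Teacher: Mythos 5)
Your overall architecture matches the paper's: exhibit the canonical chain $\langle E(I\times\alpha)\rangle$ with colimit $E(I\times\delta)$, prove each successor step is $\mu$-universal, and then absorb an arbitrary $(\mu,\delta)$-chain by the back-and-forth uniqueness argument of Lemma~\ref{uniqlim}. The first and third paragraphs of your proposal are essentially the paper's first and last steps. The divergence, and the problem, is entirely in your second paragraph.

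The paper does \emph{not} route the universality of $E(I\times\alpha)\to E(I\times(\alpha+1))$ through $1$-special extensions or Theorem~\ref{specimpliesuniv}, and for good reason: that route requires $\mu$-stability, which is nowhere established in this paper (it is true below a categoricity cardinal, but proving it is itself a nontrivial EM-style argument that you would have to import or reproduce), and it requires an actual $1$-special presentation of $E(J+\mu^{<\omega})$ over $E(J)$, which you assert rather than construct. The latter is genuinely delicate: you must filter $\mu^{<\omega}$ as a continuous increasing union $\bigcup_{i<\mu}L_i$ in which each $L_{i+1}$ contains an infinite suborder lying \emph{above all of} $L_i$ (so that the finite support of each realizing element, which sits above the whole base in the appended $\lambda$, can be transported over the entire size-$\mu$ object $E(J+L_i)$, not merely over a small piece as in Proposition~\ref{emsat}); with the standard order on $\mu^{<\omega}$ an arbitrary filtration does not have this property. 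None of this is in your sketch. The paper's argument is shorter and avoids both obligations: given $f:E(I\times\alpha)\to M$ with $M$ of size $\mu$, embed $M$ over $E(I\times\alpha)$ into the saturated categorical object $E(I\times\lambda)$ (saturation of size-$\lambda$ objects being the one consequence of categoricity that is actually available, via Corollary~7.9 of \cite{LR}); since $E(I\times\lambda)$ is a $\mu^+$-directed colimit of objects $E(Y)$ with $I\times\alpha\subseteq Y$ and $|Y|=\mu$, and $M$ is $\mu^+$-presentable, the embedding factors through some $E(Y)$; finally $Y=(I\times\alpha)+Y_1$ with $|Y_1|\leq\mu$, and the universality of $\mu^{<\omega}$ as a linear order embeds $Y_1$ into the $(\alpha+1)$st copy of $I$, giving $E(Y)\to E(I\times(\alpha+1))$ over $E(I\times\alpha)$. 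You should replace your second paragraph with this argument, or else supply both the stability result and the explicit $1$-special filtration.
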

\begin{proof}We first prove $\langle E(I\times\alpha)\,|\,\alpha<\delta\rangle$ is a $(\mu,\delta)$-chain over $E(I)$.  In particular, we show that $E(I\times(\alpha+1))$ is $\mu$-universal over $E(I\times\alpha)$.  Consider $f:E(I\times\alpha)\to M$ with $M$ of size $\mu$.  By saturation of $E(I\times\lambda)$---which follows from categoricity in $\lambda$, again by Corollary~7.9 in \cite{LR}---there is a morphism $g:M\to E(I\times\lambda)$ such that the following diagram commutes:
$$\xymatrix{E(I\times\alpha)\ar[r]^{f}\ar[dr]_{\bar{i}} & M\ar[d]_{g}\\
 & E(I\times\lambda)}$$
 where $\bar{i}$ is the $\ck$-morphism induced by the inclusion $i:I\times\alpha\to I\times\lambda$.  As $I\times\lambda$ can be expressed as a $\mu^+$-directed colimit of suborders $I\times\alpha\subseteq Y\subseteq I\times\lambda$ of size $\mu$, and the functor $E$ preserves both notions, $E(I\times\lambda)$ is a $\mu^+$-directed colimit of the $E(Y)$, all of which are of size $\mu$.  As $M$ is $\mu^+$-presentable, $g$ factors through some $E(Y)\to E(I\times\lambda)$.  So we have
 $$\xymatrix{E(I\times\alpha)\ar[rr]^{f}\ar[dr]_{\bar{j}} & & M\ar[dd]_{g}\ar[dl]^{g'}\\
 & E(Y)\ar[dr] & \\
 & & E(I\times\lambda)}$$
 where the upper triangle commutes.  So $M$ embeds in $E(Y)$ over $E(I\times\alpha)$, hence it suffices to show that $E(Y)$ embeds in $E(I\times(\alpha+1))$.  This is simple: $Y$ consists of a disjoint union of $Y_0\subseteq I\times\alpha$ and $Y_1\subseteq I\times\lambda\setminus I\times\alpha$ of size at most $\mu$.  By universality of $I=\mu^{<\omega}$, $Y_1$ embeds in the $(\alpha+1)$st copy of $I$.  So we have an induced morphism $E(Y)\to E(I\times(\alpha+1))$ that fixes $E(I\times\alpha)$.  Hence $\langle E(I\times\alpha)\,|\,\alpha<\delta\rangle$ is a $(\mu,\delta)$-chain over $E(I)$, as claimed.
 
 The conclusion now follows from the uniqueness of $(\mu,\mu\times\delta)$-limits over $E(I)$ proven in Lemma~\ref{uniqlim}.\end{proof}
 
 We require one more flavor of uniqueness for limit objects, which, like Lemma~\ref{uniqlim} (or Lemma~10.8 in \cite{Ba}), can be obtained via a simple back-and-forth argument.
 
 \begin{lemma} If $M$ is of size $\mu$ and $\bar{M}$ and $\bar{N}$ are, respectively, $(\mu,\delta)$- and $(\mu,\mu\times\delta)$-limits over $M$ with $\delta<\mu^+$, then $\bar{M}\cong\bar{N}$.\end{lemma}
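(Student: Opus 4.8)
The plan is to reduce the statement to the uniqueness of \emph{equal-length} limit objects, which is exactly what the back-and-forth of Lemma~\ref{uniqlim} provides. Write $\bar N=\colim_{j<\mu\times\delta}N_j$ for the defining $(\mu,\mu\times\delta)$-chain, with $N_0=M$. The first step is to observe that the subchain indexed by the ordinal multiples of $\mu$,
\[
\langle N_{\mu\times i}\mid i<\delta\rangle,
\]
is itself a $(\mu,\delta)$-chain over $M$. Indeed, since $\delta$ is a limit ordinal we have $\mu\times\delta=\sup_{i<\delta}\mu\times i$, so this subchain is cofinal and hence has colimit $\bar N$; continuity of the original chain (together with $\mu\times i=\sup_{i'<i}\mu\times i'$ at limit $i$) yields continuity of the subchain; and for each $i$ the map $N_{\mu\times i}\to N_{\mu\times(i+1)}$ is the $\mu$-universal map $N_{\mu\times i}\to N_{\mu\times i+1}$ followed by a further extension, whence $N_{\mu\times(i+1)}$ is again $\mu$-universal over $N_{\mu\times i}$. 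Thus $\bar N$ is a $(\mu,\delta)$-limit over $M$, exactly as $\bar M$ is.

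Having exhibited both $\bar M$ and $\bar N$ as $(\mu,\delta)$-limits over the same object $M$, the second step is a back-and-forth between the two $(\mu,\delta)$-chains, identical in form to the argument of Lemma~\ref{uniqlim}. Beginning with $f_0=\id_M$, one alternately invokes $\mu$-universality of the successor terms: universality of $M_{i+1}$ over $M_i$ supplies a map $N_{\mu\times i}\to M_{i+1}$ extending the relevant composite (legitimate since $N_{\mu\times i}$ has size $\mu$), and universality of $N_{\mu\times(i+1)}$ over $N_{\mu\times i}$ supplies a map in the other direction; at limit stages one passes to the colimit, the induced maps being furnished by its universal property. As in Lemma~\ref{uniqlim}, the two families compose to the respective identities because the colimit cocones are jointly epimorphic, so the limit-stage maps are mutually inverse and, carried through to stage $\delta$, produce the desired isomorphism $\bar M\cong\bar N$ (in fact over $M$).

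The argument is routine once the reindexing is in place, and the only points demanding attention are exactly those already settled in Lemma~\ref{uniqlim}: that the induced maps at limit stages are well-defined and that their composites are identities. The genuinely new observation is the cofinal-subchain reduction of the first paragraph, which lets us trade the formally longer $(\mu,\mu\times\delta)$-limit for an ordinary $(\mu,\delta)$-limit; this hinges only on the elementary facts that $\mu\times\delta$ is continuous in $\delta$ and that universality is preserved under post-composition with extensions. I expect no real obstacle beyond bookkeeping, since, in contrast to the uniqueness of limits of differing cofinality, both chains here have length of the same cofinality $\cof(\delta)$ and share a common base.
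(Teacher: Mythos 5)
Your proof is correct and follows essentially the route the paper intends: the paper gives no written proof for this lemma, remarking only that it follows by a back-and-forth as in Lemma~\ref{uniqlim} (or Baldwin's Lemma~10.8), and your argument is a careful instantiation of exactly that. The cofinal-subchain reindexing (using continuity of ordinal multiplication and the fact that $\mu$-universality is preserved under post-composition with further chain maps) is the right way to reduce to two chains of equal length, after which the back-and-forth and the jointly-epimorphic-cocone argument go through verbatim.
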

 
 \begin{theo}\label{bigthm} Let $(\ck,U)$ be a weak $\aleph_0$-CAEC that is categorical in $\lambda^+$, and let $\lambda_U+\lambda_E<\chi<\lambda^+$ with $\chi$ a limit cardinal.  Then the colimit of any continuous $\delta$-chain of $\chi$-saturated objects with $\delta<\chi^+$ is $\chi$-saturated.\end{theo}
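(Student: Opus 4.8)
The plan is to follow the superstable template, realizing the missing type inside a limit model that we build {\em inside} the union. Since $\chi$ is a limit cardinal, $\bar M=\colim_{i<\delta}M_i$ is $\chi$-saturated as soon as it is $\mu^+$-saturated for every $\mu$ with $\lambda_U+\lambda_E\le\mu<\chi$: such $\mu^+$ are cofinal in $\chi$, and any subobject of size $<\chi$ has size $\le\mu$ for one of them. So I fix such a $\mu$, a subobject $A$ of $\bar M$ of size $\le\mu$, and a type $p$ over $A$, and aim to realize $p$ in $\bar M$. Replacing $\delta$ by a continuous cofinal subchain, I may assume $\delta=\sigma$ is regular. \emph{Easy case: $\sigma>\mu$.} Here, since $\mu\ge\lambda_U$, $U$ preserves the size of $A$, so $U(A)$ has size $\le\mu<\sigma$; as directed colimits are concrete and $\sigma$ is regular, $U(A)\subseteq U(M_i)$ for some $i$, whence by coherence $A$ factors through $M_i$, and $p$ is realized in the $\mu^+$-saturated $M_i$.

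\emph{Main case: $\sigma\le\mu<\chi$.} The crucial observation is that, $\chi$ being a limit cardinal, $\mu^{++}<\chi$, so each $M_i$ is not merely $\mu^+$-saturated but $\mu^{++}$-saturated, and therefore realizes all types over its subobjects of size $\le\mu^+$. Writing $A_i=A\cap M_i$ (so $A=\bigcup_{i<\sigma}A_i$ by concreteness), I construct a continuous chain $\langle B_i\mid i<\sigma\rangle$ of subobjects of size $\mu^+$ with $B_i$ a subobject of $M_i$, $A_i\subseteq B_i$, $B_i\subseteq B_{i+1}$ compatibly with the chain maps, and, decisively, $B_{i+1}$ $\mu^+$-universal over $B_i$. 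For the base I take $B_0$ to be a $\mu^+$-saturated subobject of $M_0$ of size $\mu^+$ containing $A_0$, produced by the usual L\"owenheim--Skolem construction inside the $\mu^+$-saturated $M_0$ (using $\mu^+$-stability, which follows from categoricity in $\lambda^+$). At successors I form an abstract $\mu^+$-universal extension of $B_i$ via Corollary~\ref{exunivext} (its size hypotheses hold since $\mu\ge\lambda_U$), and then embed it over $B_i$ into $M_{i+1}$; this embedding exists exactly because $M_{i+1}$ is $\mu^{++}$-saturated and so realizes the type of that extension over $B_i$, a set of size $\mu^+$. At limits I take unions, which stay of size $\mu^+$ because $\sigma\le\mu$.

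Now a saturated object of size $\mu^+$ is isomorphic to $E((\mu^+)^{<\omega})$, by Corollary~\ref{emsatex} together with the uniqueness of saturated objects of a fixed size; hence $B_0\cong E((\mu^+)^{<\omega})$ and $\langle B_i\mid i<\sigma\rangle$ is a $(\mu^+,\sigma)$-chain over $E((\mu^+)^{<\omega})$. Since $\sigma<\mu^{++}$ and $\lambda_E<\mu^+<\lambda^+$, Lemma~\ref{emlims} applies (with $\mu^+$ in place of $\mu$, and the regular successor $\lambda^+$ in place of the regular $\lambda$) and gives $\bar B:=\colim_{i<\sigma}B_i\cong E((\mu^+)^{<\omega}\times\sigma)$. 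As the index order contains $(\mu^+)^{<\omega}$, Corollary~\ref{emsatex}(2) shows this EM-object is $\mu^+$-saturated. Finally $A\subseteq\bar B\subseteq\bar M$ with $|A|\le\mu<\mu^+$, so $p$ is realized in $\bar B$, hence in $\bar M$.

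The main obstacle is the middle step, and in particular the recognition that the internal limit $\bar B$ is $\mu^+$-saturated. A short limit model, with $\sigma<\mu^+$, need not be saturated in a merely stable setting; it is precisely here that categoricity enters, funnelled through the EM-machinery, since Lemma~\ref{emlims} and Corollary~\ref{emsatex} force every limit model over $E((\mu^+)^{<\omega})$ to be $\mu^+$-saturated regardless of its (possibly small) cofinality. The bookkeeping in the construction of $\langle B_i\rangle$---securing $\mu^+$-universality of the successor steps by transporting abstract universal extensions into the highly saturated $M_{i+1}$, and preserving $A_i\subseteq B_i$ together with compatibility of the $B$-chain with the $M$-chain across limit stages---is routine but must be done carefully. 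I also lean on two standard facts that follow from the standing hypotheses though they are not isolated in the excerpt: that categoricity in $\lambda^+$ yields $\nu$-stability for $\lambda_U+\lambda_E\le\nu<\lambda^+$, and that saturated objects of a fixed size are unique up to isomorphism.
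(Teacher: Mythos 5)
Your proposal is correct and follows essentially the same route as the paper's proof: reduce to the case of small cofinality, build a chain of $\mu^+$-universal extensions inside the given chain that captures the domain of the type (using the extra saturation headroom afforded by $\chi$ being a limit cardinal), identify its colimit with $E((\mu^+)^{<\omega}\times\sigma)$ via Lemma~\ref{emlims}, and conclude $\mu^+$-saturation from Corollary~\ref{emsatex}(2). The two auxiliary facts you flag (stability below the categoricity cardinal, and uniqueness of saturated objects of a given size, which the paper instead handles by re-basing the chain at $E((\kappa^+)^{<\omega})$ ``without loss of generality'') are likewise used implicitly in the paper's argument.
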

 \begin{proof}By Corollary~\ref{emsatex}, $\chi$-saturated objects exist.  Let $N$ be the colimit of a chain $\langle \phi_{i,j}:N_i\to N_j\,|\,i\leq j<\delta\rangle$, where each $N_i$ is $\chi$-saturated.  Consider $f:M\to N$ with $M$ of size $\kappa<\chi$.  We wish to show that any type over $M$ is realized in $N$.
 
By concreteness of directed colimits, 
$$U(N)=\bigcup_{i<\delta} U\phi_i(U(N_i))$$
and $Uf(U(M))$ is a subset.  Let
$$I=\{i<\delta\,|\,U\phi_{i+1}(U(N_{i+1}))\cap Uf((U(M))\setminus U\phi_i(U(N_i))\neq\emptyset\}$$
Note that we can list this index set as $I=\{i_\alpha\,|\,\alpha<\delta'\}$ where 
$$\delta'<\max(\kappa^+,{\rm cf}(\delta)^+)<\chi.$$
We can express $U(M)$ as the union of an increasing $\delta'$-chain of sets 
$$X_\alpha=\{x\in U(M)\,|\,Uf(x)\in U\phi_i(U(N_i))\}$$
For each $\alpha<\delta'$, let $f_\alpha:X_\alpha\to N_{i_\alpha}$ such that $(U\phi_{i_\alpha})f_\alpha=Uf$.  We build a continuous chain $\langle \psi_{i,j}:M_\alpha\to M_\beta\,|\,\alpha\leq\beta<\delta'\rangle$ such that each $M_i$ is $\kappa^+$-saturated, each $\psi_{\alpha,\alpha+1}:M_\alpha\to M_{\alpha+1}$ is $\mu$-universal.  Moreover, we wish to build a series of set maps $t_\alpha:X_\alpha\to U(M_\alpha)$ and $\ck$-morphisms $g_\alpha:M_{\alpha}\to N_{i_\alpha}$ so that all cells of the following diagram commute:
$$\xymatrix{
N_{i_0}\ar[r]^{\phi_{i_0,i_1}} & N_{i_1}\ar[r]^{\phi_{i_1,i_2}} & N_{i_2}\ar[r] & \dots & N\\
M_0\ar[r]_{\psi_{0,1}}\ar[u]^{g_0=Id} & M_1\ar[r]_{\psi_{1,2}}\ar[u]^{g_1} & M_2\ar[r]\ar[u]^{g_2} & \dots & \bar{M}\ar[u]^{g}\\
X_0\ar[r]\ar[u]^{t_0}\ar@/_1pc/[uu]_>{f_0} & X_1\ar[r]\ar[u]^{t_1}\ar@/_1pc/[uu]_>{f_1} & X_2\ar[r]\ar[u]^{t_2}\ar@/_1pc/[uu]_>{f_2} & \dots & U(M)\ar[u]^{t}\ar@/_1pc/[uu]_>{f}
}$$
We begin with $M_0=N_{i_0}$ and $g_0$ the identity, with $t_0=f_0$.  By $\kappa^{++}$-saturation of $N_1$, we can find $\psi_{0,1}:M_0\to M_1$ and $g_1:M_1\to N_{i_1}$ where $M_1$ is of size $\kappa^+$, is $\kappa^+$-saturated, and $\psi_{0,1}:M_0\to M_1$ is $\mu$-universal.  Moreover, we may do so in such a way as to ensure that $f_1(X_1)\subseteq Ug_1(M_1)$.  Choose $t_1$ so that $(Ug_1)t_1=f_1$.  Notice that $f_0$ and $f_1$ are compatible with $\phi_{0,1}$ by design, meaning that the outer rectangle of the diagram below commutes, as does the upper square.
$$\xymatrix{
N_{i_0}\ar[r]^{\phi_{i_0,i_1}} & N_{i_1}\\
M_0\ar[r]_{\psi_{0,1}}\ar[u]^{g_0=Id} & M_1\ar[u]^{g_1}\\
X_0\ar[r]\ar[u]^{t_0} & X_1\ar[u]^{t_1}
}$$
as all morphisms are mono, this implies that the lower square commutes as well.  We proceed in the same way at each successor stage.  For limit $\alpha$, take colimits.  
The $t_\alpha$ induce a set map $t:U(M)\to U(\bar{M}))$, the $g_\alpha$ induce a $\ck$-morphism $g:\bar{M}\to N$, and the construction ensures that $(Ug)t=Uf$.  By coherence, there is a $\ck$-morphism $\bar{t}:M\to\bar{M}$ so that $U\bar{t}=t$.

If $\kappa<{\rm cf}(\delta)$, $M$ embeds in some $M_{\alpha+1}$ with $\alpha<\delta'$.  By $\kappa^+$-saturation of $M_{\alpha+1}$, any type over $M$ is realized in $M_{\alpha+1}$, hence in $\bar{M}$, hence in $N$.  Otherwise, $\langle M_\alpha\,|\,\alpha<\delta'\rangle$ is a $(\kappa^+,\delta')$-chain over $M_0$. 
Without loss of generality (but possibly at the cost of deleting $M_1$) we may replace $M_0$ by $E((\kappa^+)^{<\omega})$.  Being a $(\kappa^+,\delta')$-limit over $E((\kappa^+)^{<\omega})$, 
$$\bar{M}\cong E((\kappa^+)^{<\omega}\times\delta')$$
 by Lemma~\ref{emlims}.  Hence, by Corollary~\ref{emsatex}(2), $\bar{M}$ is $\kappa^+$-saturated, and we are finished.\end{proof}
 
 As $\Satx(\ck)$, the full subcategory of $\ck$ consisting of $\chi$-saturated models, is certainly closed under chains of length (or, rather, cofinality) at least $\chi^+$, Theorem~\ref{bigthm} implies that $\Satx(\ck)$ is closed under colimits of arbitrary chains, hence, in fact, under arbitrary directed colimits:
 
 \begin{coro}Let $(\ck,U)$ be a weak $\aleph_0$-CAEC that is categorical in $\lambda^+$, and let $\lambda_U+\lambda_E<\chi<\lambda^+$ with $\chi$ a limit cardinal.  Then $\Satx(\ck)$ is closed under directed colimits.\end{coro}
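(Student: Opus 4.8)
The plan is to deduce the statement from Theorem~\ref{bigthm} in two moves: first reduce closure under arbitrary directed colimits to closure under colimits of well-ordered continuous chains, and then verify the latter by splitting according to the cofinality of the chain. For the reduction I would invoke the standard fact (see \cite{AR}) that any directed colimit can be rebuilt as an iterated colimit of chains: writing a directed index poset $P$ of cardinality $\mu$ as a continuous increasing union $P=\bigcup_{\xi}P_\xi$ of directed subposets of smaller cardinality, one expresses $\colim_P D$ as the colimit of the continuous chain $\langle\colim_{P_\xi}D\rangle_\xi$, and an induction on $\mu$ (the base case being that every countable directed poset has a cofinal $\omega$-chain) shows that a full subcategory closed under colimits of all continuous chains is closed under all directed colimits. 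Thus it suffices to show that for a continuous chain $\langle M_i\to M_j\mid i\le j<\delta\rangle$ of $\chi$-saturated objects, the colimit $M=\colim_{i<\delta}M_i$ is again $\chi$-saturated.

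If $\operatorname{cf}(\delta)\ge\chi^+$, I would argue directly from the definition of saturation. Given a subobject $f:A\to M$ with $|A|<\chi$, the hypothesis $\chi>\lambda_U$ guarantees via Remark~\ref{wkcaecfacts} that $|U(A)|<\chi$; since directed colimits in a weak $\aleph_0$-CAEC are concrete, $U(M)=\bigcup_{i<\delta}U\phi_i(U(M_i))$, and because there are fewer than $\operatorname{cf}(\delta)$ elements in $Uf(U(A))$ they must all lie in a single $U\phi_i(U(M_i))$. Coherence then yields a $\ck$-morphism $A\to M_i$ through which $f$ factors. As $M_i$ is $\chi$-saturated and $|A|<\chi$, every type over $A$ is realized in $M_i$, hence in $M$ via the embedding $\phi_i$.

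If instead $\operatorname{cf}(\delta)\le\chi$, so that $\gamma=\operatorname{cf}(\delta)<\chi^+$, I would pass to a cofinal continuous subchain $\langle M_{j_\alpha}\mid\alpha<\gamma\rangle$. Continuity of the original chain makes this subchain continuous, each $M_{j_\alpha}$ is one of the original $\chi$-saturated objects, and the subchain shares the colimit $M$. Since its length $\gamma$ is $<\chi^+$, Theorem~\ref{bigthm} applies directly and shows that $M$ is $\chi$-saturated. Combining the two cases shows that $\Satx(\ck)$ is closed under colimits of arbitrary continuous chains, and hence, by the reduction above, under arbitrary directed colimits.

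The main obstacle I anticipate lies in the reduction step rather than in either cofinality case: directed posets need not admit cofinal well-ordered chains (for instance $[\,\omega_1\,]^{<\omega}$ ordered by inclusion), so the passage from chains to general directed colimits genuinely requires the inductive decomposition of the index poset together with a check that the partial colimits $\colim_{P_\xi}D$ remain $\chi$-saturated and assemble into a continuous chain to which the two cases above can be applied. The cofinality cases themselves are routine once Theorem~\ref{bigthm} is in hand; the only delicate point there is the transfer of realizations along the colimit maps $\phi_i$, which is immediate since all morphisms of $\ck$ are embeddings.
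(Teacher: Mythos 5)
Your proposal is correct and follows essentially the same route as the paper, which disposes of this corollary in a single sentence: chains of cofinality at least $\chi^+$ are handled trivially (a small subobject factors through some term of the chain), chains of smaller cofinality are handled by passing to a cofinal continuous subchain and applying Theorem~\ref{bigthm}, and arbitrary directed colimits reduce to iterated chain colimits by the standard decomposition of directed posets. The only point to state a bit more carefully is that the cofinal subsequence $\langle j_\alpha\rangle$ must be chosen continuous (i.e.\ $j_\beta=\sup_{\alpha<\beta}j_\alpha$ at limits) for the subchain to inherit continuity from the original chain, which is always possible.
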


\end{document}